\newtheorem{theorem}{Theorem}[section]
\newtheorem{lemma}[theorem]{Lemma}
\newtheorem{proposition}[theorem]{Proposition}
\theoremstyle{definition}
\newtheorem{remark}[theorem]{Remark}
\def\F{\mathcal{F}}
\def\Cap{{\rm Cap}}
\def\R{\mathbb R}
\def\H{\mathcal{H}}
\def\LM#1{\hbox{\vrule width.2pt \vbox to#1pt{\vfill \hrule width#1pt
height.2pt}}}
\def\LL{{\mathchoice {\>\LM7\>}{\>\LM7\>}{\,\LM5\,}{\,\LM{3.35}\,}}}
\def\restr{{\LL}}
\def\cE{\mathcal{E}}
\def\I{\mathcal{I}}
\def\les{\lesssim}
\def\ges{\gtrsim}
\def\eps{\varepsilon}
\def\lt{\left}
\def\rt{\right}
\def\N{\mathbb{N}}
\renewcommand{\phi}{\varphi}
\author{Michael Goldman}
\address{LJLL, Universit\'e Paris Diderot, CNRS, UMR 7598, Paris, France}
\email{goldman@math.univ-paris-diderot.fr}
\author{Berardo Ruffini}
\address{Institut Montpelli\'erain Alexander Grothendieck, University of Montpellier, 34095 Montpellier Cedex 5, France}
\email{berardo.ruffini@umontpellier.fr}
\numberwithin{equation}{section}
\title[Review on equilibrium shapes]{Equilibrium shapes of charged droplets and related problems: (mostly) a review}
\begin{document}

\begin{abstract}

We review some recent results on the equilibrium shapes of charged liquid drops. We show that the natural variational model is ill-posed and how this can be overcome by either restricting
the class of competitors or by adding penalizations in the functional.  The original contribution of this note is twofold. First, we prove existence of an optimal distribution of charge for a
conducting drop subject to an external electric field. Second, we prove that there exists no optimal conducting drop in this setting.  
\end{abstract}

\maketitle

%\tableofcontents
% \tableofcontents

The main purpose of the   paper  is to  review some recent progress in the study of variational problems describing the shape of conducting
liquid drops. The salient feature of these models is the competition between an interfacial term  with a non-local and repulsive term of capacitary type.
 The somewhat surprising and puzzling fact is that contrarily to the experimental observations these models are generally ill-posed.
 However,  taking into account various possible regularizing mechanisms it 
is possible in some cases to recover well-posedness together with stability results for the ball in the regime of small charges. \\
Alongside this review, we also provide new results on the closely related problem of equilibrium shapes of conducting drops subject to an external electric field. 
We prove that for every fixed drop, the optimal distribution of charges exists but that as for the charged drops model, no equilibrium shape exists. Moreover, we show that well-posedness cannot be recover easily since minimizer do not exist even in the rigid class of convex sets.\\ 

The paper is organized as follows. In Section \ref{section1} we recall the definition of the  capacity and study existence and characterization of optimal distributions of charges.  
In Section \ref{section2} we review some recent results on the charged liquid drop model. In particular, we show ill-posedness of this problem and discuss various  possible regularizations.
In Section \ref{section3} we study  the problem of equilibrium shapes for perfectly  conducting  drops in an external electric field. 
In the last section, we state several open problems.

%\textcolor{red}{non sono molto contento di quest introduzione, forse si puo allungare un po}
\section{Equilibrium measures, potentials and capacities}\label{section1}
We start by investigating the optimal distribution of charges for a given compact set  $\Omega\subset \R^N$. Most of the results can be found in \cite{landkof,GNRI,GNRII}.  
For fixed $\alpha\in (0,N)$, and Radon measures $\mu$, $\nu$ we define 
\[
 I_\alpha(\mu,\nu):= \int_{\R^N\times \R^N} \frac{d\mu(x)\,d\nu(y)}{|x-y|^{N-\alpha}}
\]
and let then $I_\alpha(\mu):=I_\alpha(\mu,\mu)$. With a slight abuse of notation, we  also let
\[
 I_N(\mu,\nu):= \int_{\R^N\times \R^N}- \log |x-y| \,d\mu(x)\,d\nu(y)
\]
The  equilibrium measure of a set $\Omega$ is the solution of the problem
\[
\I_\alpha(\Omega):=\min_{\mu(\Omega)=1}I_\alpha(\mu),
\]  
where the class of minimization runs over all probability measures on $\Omega$. We denote by  $\I_\alpha(\Omega)$ the Riesz potential energy of $\Omega$. We can then define the $\alpha-$capacity of a set $\Omega$ as 
\[
 \Cap_\alpha(\Omega):=\frac{1}{\I_\alpha(\Omega)}\quad  \textrm{ if } \alpha<N \qquad \textrm{and} \qquad \Cap_N(\Omega):= e^{-\I_N(\Omega)}.  
\]
For a given measure $\mu$, it is useful to  define the associated  potential
\[
v(x):=\int_{\Omega}\frac{d\mu(y)}{|x-y|^{N-\alpha}},
\]
and its natural counterpart in the logarithmic case $\alpha=N$. We list below some known facts about equilibrium measures and potentials.
\begin{theorem}\label{th:equmeasure}%[Properties of the equilibrium measure and potentials]
	Let $\Omega$ be a compact subset of $\R^N$ and $\alpha\in (0,N]$ be such that $\I_\alpha(\Omega)<+\infty$\footnote{this is for instance the case if $\Omega$ is a compact set with $|\Omega|>0$.}. Then, 
		\begin{enumerate}
		\item There exists a unique equilibrium measure $\mu$ \cite[p. 131--133]{landkof}.
	      \item There exists a constant $c(N,\alpha)>0$ such that the potential $v$ satisfies 
		\[(-\Delta)^{\frac{\alpha}{2}} v= c(N,\alpha) \mu\]
		in the distributional sense (here $(-\Delta)^s$ refers to the $s$-Laplacian). 
		In particular $v$ is $\alpha/2-$harmonic outside of the support of $\mu$. Moreover $v=\I_\alpha(\Omega)$ $\alpha-$q.e. on the support of $\mu$\footnote{we say that a property holds $\alpha-$q.e. if it holds up to a set of zero capacity.} 
		and $v\ge \I_\alpha(\Omega)$ $\alpha-$q.e. on $\Omega$ \cite[Lem. 2.11]{GNRI}. 
	      \item For $\alpha<N$, 
		\[
		\lim_{|x|\to\infty}v(x)|x|^{N-\alpha}=1
		\]
		 while for $\alpha=N$,
		\[
		\lim_{|x|\to\infty} v(x) +\log|x|=0.
		\]
		\item For $1<\alpha<2$,  the support of $\mu$ coincides with $\Omega$ \cite[Lem. 2.15]{GNRI}. 
		\item For $\alpha\ge 2$   the support of $\mu$ is contained in $\partial\Omega$ and $v=\I_\alpha(\Omega)$ on $\Omega$ \cite[Lem. 2.15]{GNRI}.
		
%  \textcolor{blue}{ Mi pare che sia piuttosto
%			\[
%			\lim_{\infty} v+\log |x|=0
%			\]
%		}\textcolor{red}{berardo: in effetti la formula che ho scritto e sicuramente sbagliata. come mai vale la tua? hai in mente una referenza?}
%		\item There exists a unique equilibrium measure $\mu$ \cite[p. 131--133]{landkof}.
%\item For $1<\alpha<2$,  the support of $\mu$ coincides with $\Omega$ \cite[Lem. 2.15]{GNRI}. 
%\item \textcolor{blue}{Non capisco a cosa serve questa proprieta?}\textcolor{red}{berardo: in effetti a niente, pero mi sembrava simpatica. si puo cancellare senza problemi.} In the bi-dimensional logarithmic case, $N=2$, $k=\log$, the optimal measure of a set $\Omega$ can be characterized as $\mu=\frac{|\psi'|}{2\pi}\H^{1}\restr\partial B_1$ \cite{GNRII}.
	\end{enumerate}
	
\end{theorem}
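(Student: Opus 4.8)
The plan is to establish the five assertions in the order stated, each using the previous ones. For \textbf{(1)} I would run the direct method: on the compact set $\Omega$ the Riesz kernel $|x-y|^{-(N-\alpha)}$, and (after subtracting a constant) the logarithmic kernel $-\log|x-y|$, is lower semicontinuous and bounded below, so $\mu\mapsto I_\alpha(\mu)$ is lower semicontinuous for the weak-$*$ topology, which is compact on the probability measures carried by $\Omega$; since $\I_\alpha(\Omega)<+\infty$, a minimizing sequence has a weak-$*$ limit realizing the minimum. Uniqueness rests on strict positive-definiteness of the kernel: for any finite-energy signed measure $\sigma$ with $\sigma(\R^N)=0$ one has $I_\alpha(\sigma)\ge 0$ with equality only for $\sigma=0$ — transparent on the Fourier side, where $I_\alpha(\sigma)=c\int|\widehat\sigma(\xi)|^2|\xi|^{-\alpha}\,d\xi$ (with a truncation argument in the logarithmic case). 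Then for two minimizers $\mu_1,\mu_2$ the identity $I_\alpha\big(\tfrac{\mu_1+\mu_2}{2}\big)=\tfrac12 I_\alpha(\mu_1)+\tfrac12 I_\alpha(\mu_2)-\tfrac14 I_\alpha(\mu_1-\mu_2)$ forces $\mu_1=\mu_2$.

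Assertion \textbf{(2)} comes from the first variation: comparing $\mu$ with $(1-t)\mu+t\nu$ for $\nu$ a probability measure on $\Omega$ and letting $t\to0^+$ gives $\int v\,d\nu\ge\I_\alpha(\Omega)$ for every such $\nu$, from which one reads off $v\ge\I_\alpha(\Omega)$ $\alpha$-q.e.\ on $\Omega$ (the set where this fails must be polar, otherwise moving a bit of mass there would lower the energy) and $v\le\I_\alpha(\Omega)$ on $\mathrm{supp}\,\mu$; together with $\int v\,d\mu=\I_\alpha(\Omega)$ this yields $v=\I_\alpha(\Omega)$ $\alpha$-q.e.\ on $\mathrm{supp}\,\mu$. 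The distributional identity $(-\Delta)^{\alpha/2}v=c(N,\alpha)\mu$ is then just the statement that $|x|^{-(N-\alpha)}$ is a positive multiple of the fundamental solution of $(-\Delta)^{\alpha/2}$ (its Fourier transform being a positive multiple of $|\xi|^{-\alpha}$), with the logarithmic analogue for $\alpha=N$; and $\alpha/2$-harmonicity off $\mathrm{supp}\,\mu$ is immediate because $\mu$ vanishes there. Assertion \textbf{(3)} is elementary: for $y$ in the compact set $\Omega$ one has $|x|^{N-\alpha}/|x-y|^{N-\alpha}\to1$ boundedly and uniformly as $|x|\to\infty$, so dominated convergence gives $v(x)|x|^{N-\alpha}\to\mu(\Omega)=1$, and likewise $v(x)+\log|x|=\int\log(|x|/|x-y|)\,d\mu\to0$ when $\alpha=N$.

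The substance is in \textbf{(4)} and \textbf{(5)}, which reflect the split between the genuinely nonlocal regime $\alpha<2$ and the (super-)local regime $\alpha\ge2$. For \textbf{(4)}, with $1<\alpha<2$, the Maria--Frostman maximum principle (valid precisely for $\alpha\le2$) gives $v\le\I_\alpha(\Omega)$ on all of $\R^N$, so with (2) one has $v=\I_\alpha(\Omega)$ $\alpha$-q.e.\ on $\Omega$; were some interior point $x_0$ of $\Omega$ outside $\mathrm{supp}\,\mu$, then on a small ball $B\ni x_0$ disjoint from $\mathrm{supp}\,\mu$ the function $w:=v-\I_\alpha(\Omega)$ would satisfy $w=0$ and $(-\Delta)^{\alpha/2}w=0$, whence $w\equiv0$ on $\R^N$ by unique continuation for $\alpha/2$-harmonic functions, contradicting $v\to0$ at infinity; thus $\mathrm{int}(\Omega)\subseteq\mathrm{supp}\,\mu$, i.e.\ $\mathrm{supp}\,\mu=\Omega$ when $\Omega=\overline{\mathrm{int}(\Omega)}$. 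For \textbf{(5)}, the key point is that for $\alpha\ge2$ the potential $v$ is \emph{classically} superharmonic, since $-\Delta v$ equals, up to a positive constant, $\mu$ itself when $\alpha=2$ and the Riesz potential $\int|x-y|^{-(N-\alpha+2)}\,d\mu(y)$ when $\alpha>2$, both nonnegative; as the minimum of $v$ over $\Omega$ is attained (up to a polar set, by (2)) on $\mathrm{supp}\,\mu$, a point $x_0\in\mathrm{int}(\Omega)\cap\mathrm{supp}\,\mu$ would be an interior minimum of a superharmonic function, forcing $v$ to be constant near $x_0$ and hence $-\Delta v$ to vanish there — impossible, since this would make $\mu$ either absent near $x_0$ (when $\alpha=2$) or a point mass of infinite energy (when $\alpha>2$). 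Hence $\mathrm{supp}\,\mu\subset\partial\Omega$, and the identity $v=\I_\alpha(\Omega)$ on $\Omega$ then follows from the q.e.\ boundary values $v=\I_\alpha(\Omega)$ on $\partial\Omega$ together with uniqueness for the (fractional) Dirichlet problem on $\mathrm{int}(\Omega)$, as in \cite[Lem.~2.15]{GNRI}.

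The main obstacle I expect is precisely \textbf{(4)}--\textbf{(5)}: carrying out the maximum- and minimum-principle arguments rigorously when $v$ is defined only $\alpha$-quasi-everywhere, when $\partial\Omega$ is merely the topological boundary of a set of finite Riesz energy (so that ``$v=\I_\alpha(\Omega)$ on $\partial\Omega$'' and the Dirichlet problem on $\mathrm{int}(\Omega)$ must be read in the appropriate weak/quasi-everywhere sense), and in handling the threshold $\alpha=2$, where the available maximum principles change nature. For these fine points I would follow the potential-theoretic machinery of \cite{landkof} and the arguments of \cite[Lem.~2.11 and Lem.~2.15]{GNRI}.
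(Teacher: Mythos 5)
First, a point of comparison: the paper does not actually prove Theorem \ref{th:equmeasure} --- it is stated as a list of known facts with references to \cite{landkof} and \cite{GNRI} --- so your self-contained sketch is necessarily an independent route. Items (1)--(3) and the Frostman conditions in (2) follow the standard potential-theoretic arguments and are fine. In (4), two remarks. Your argument only yields $\mathrm{int}(\Omega)\subseteq\mathrm{supp}\,\mu$, so you prove the statement under the extra proviso $\Omega=\overline{\mathrm{int}(\Omega)}$ (which you flag; some such proviso is indeed needed, since adding an isolated point to a ball changes $\Omega$ but not $\mu$). Also, the unique continuation theorem you invoke is much heavier than necessary: by Maria's maximum principle every point of a ball $B\subset\Omega\setminus\mathrm{supp}\,\mu$ on which $v\equiv\I_\alpha(\Omega)$ is a \emph{global} maximum of $v$, and since $\alpha/2<1$ one can evaluate $(-\Delta)^{\alpha/2}v$ there as a principal-value integral of $v(x_0)-v(y)\ge 0$; this is strictly positive because $v$ is not constant (it decays at infinity), contradicting $(-\Delta)^{\alpha/2}v=0$ off $\mathrm{supp}\,\mu$. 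This is the elementary argument behind \cite[Lem.~2.15]{GNRI}.

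The genuine gap is the last step of (5). For $\alpha>2$ the function $v$ is $\alpha/2$-harmonic in $\mathrm{int}(\Omega)$, but the Dirichlet problem for $(-\Delta)^{\alpha/2}$ is nonlocal: uniqueness requires prescribing $v$ on all of $\R^N\setminus\mathrm{int}(\Omega)$, not merely its q.e.\ values on $\partial\Omega$, so ``q.e.\ boundary data plus uniqueness'' does not yield $v\equiv\I_\alpha(\Omega)$ in $\mathrm{int}(\Omega)$. In fact no argument can close this step for $\alpha>2$: if $2<\alpha<N$ and $\Omega=\overline{B_1}$, the normalized surface measure $\sigma$ on $\partial B_1$ has constant potential on the sphere (symmetry) and, its potential being superharmonic, dominates that constant inside $B_1$ by the minimum principle; hence $\sigma$ satisfies the Frostman conditions and is the equilibrium measure, while its potential at the origin equals $1>\I_\alpha(B_1)$ (strict, by strict superharmonicity). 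Thus the constancy claim ``$v=\I_\alpha(\Omega)$ on $\Omega$'' is only available at $\alpha=2$, where the correct argument is that $v$ is harmonic and bounded in $\mathrm{int}(\Omega)$ with boundary values $\I_\alpha(\Omega)$ q.e.\ and polar sets are negligible for bounded harmonic functions. Relatedly, your reason why $-\Delta v$ cannot vanish near $x_0$ when $\alpha>2$ (``a point mass of infinite energy'') is off target: the point is that $-\Delta v$ equals a positive constant times the $(\alpha-2)$-Riesz potential of $\mu$, a strictly positive function, so its vanishing on an open set forces $\mu=0$, contradicting $\mu(\Omega)=1$. With that repair (and the small fix that $v\ge\I_\alpha(\Omega)$ pointwise in $\mathrm{int}(\Omega)$ by the mean-value inequality, while $v\le\I_\alpha(\Omega)$ on $\mathrm{supp}\,\mu$ by lower semicontinuity, so the interior point is a true minimum), your proof of the support statement in (5) is sound; the identity $v=\I_\alpha(\Omega)$ on $\Omega$ should be asserted, and proved, only in the Coulombic case $\alpha=2$.
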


\begin{remark}
In the claim of point $(5)$ of the previous theorem  the support of $\mu$ is not equal to $\partial \Omega$ in general. For example, if $\Omega$ is the circular annulus
\[
\Omega=\left\{x\in\R^N:\,\frac12\le |x|\le 1 \right\},
\] 
then its optimal measure $\mu$ for $\alpha=2$ is equal to the equilibrium measure of  the ball $B_1$ that is 
\[
\mu=\frac{1}{\H^{N-1}(\partial B_1)}\,\H^{N-1}\restr \partial B_1.
\] 
\end{remark}
\begin{remark}
	The characterization at infinity of the behavior of the potential described in point $(3)$ of the previous theorem has been successfully exploited to show geometric
	inequalities, such as Brunn-Minkowski-type inequalities \cite{caffarelli,NovRuf,borell,ColSal}.  
\end{remark}

\begin{remark}
	For $N\ge3$ and $\alpha>1$ one can show that the (fractional) capacity of a set $\Omega$ can be  characterized as 
	\[
	\Cap_\alpha(\Omega)=\inf\left\{\|(-\Delta)^\frac{\alpha}{2} u\|^2_{L^2(\R^N)}\,:\,u\in C^1_c(\R^N),\,\,u\ge\chi_\Omega \right\}.
	\]
	Notice that for $\alpha=2$,  $\|(-\Delta)^\frac{\alpha}{2} u\|^2_{L^2(\R^N)}$ reduces to the Dirichlet energy of $u$.
\end{remark}
\begin{remark}
 As pointed out in \cite{GNRII}, in the Coulombic case $\alpha=2$, the equilibrium measure $\mu$ coincides with the so-called harmonic measure at infinity of $\Omega$.
\end{remark}

%The following result is the starting point of the analysis about the existence of minimizers for problem \eqref{problem}. 
%\textcolor{blue}{Non ho capito perche questa proposizione e alla base dell analisi dei minimi del problema, poi servirebbe una referenza piu precisa}\textcolor{red}{direi perche' e' da qui che si vede che perimetro e capacita' non sono compatibili.. }
The next result gives a  link between sets of Hausdorff dimension at least $N-\alpha$ and sets of positive capacity (see \cite[Th. 3.13]{landkof}).
\begin{proposition}\label{capdim}
	Let $\Omega\subset\R^N$. Then if the Hausdorff dimension of $\Omega$ is greater than $N-\alpha$, $\Cap_\alpha(\Omega)>0$.
\end{proposition}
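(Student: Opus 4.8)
The plan is to reduce the statement to the existence of a nonzero measure of finite $\alpha$-energy carried by $\Omega$, and to produce such a measure via Frostman's lemma. Recall first that $\Cap_\alpha$ is monotone with respect to inclusion, since restricting the class of competing measures can only increase $\I_\alpha$; so we may assume $\Omega$ compact (the general case following by inner regularity, replacing $\Omega$ by a compact subset of Hausdorff dimension still larger than $N-\alpha$). Fix then $t$ with $N-\alpha<t<\dim_{\H}\Omega$, so that $\H^t(\Omega)=+\infty$. By Frostman's lemma — which is precisely the content of the proof of \cite[Th. 3.13]{landkof} — there is a nonzero finite Radon measure $\mu$ carried by $\Omega$ such that $\mu(B_r(x))\le r^t$ for every $x\in\R^N$ and every $r>0$. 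After normalization we may regard $\mu$ as a probability measure on $\Omega$ satisfying $\mu(B_r(x))\les r^t$.

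The second step is a uniform bound on the potential $v(x)=\int_\Omega|x-y|^{-(N-\alpha)}\,d\mu(y)$. Splitting the domain of integration into the region $\{|x-y|\ge1\}$, where the kernel is bounded by $1$, and the dyadic annuli $\{2^{-j-1}\le|x-y|<2^{-j}\}$, $j\ge0$, and inserting the Frostman growth estimate, one gets
\[
v(x)\;\le\;\mu(\R^N)+\sum_{j\ge0}2^{(j+1)(N-\alpha)}\,\mu\bigl(B_{2^{-j}}(x)\bigr)\;\le\;\mu(\R^N)+2^{N-\alpha}\sum_{j\ge0}2^{-j(t-N+\alpha)}=:C<+\infty,
\]
which is finite and \emph{independent of $x$} exactly because $t>N-\alpha$. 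Integrating in $d\mu(x)$ yields $I_\alpha(\mu)=\int_\Omega v\,d\mu\le C\,\mu(\R^N)<+\infty$, hence $\I_\alpha(\Omega)\le I_\alpha(\mu)<+\infty$ and therefore $\Cap_\alpha(\Omega)=\I_\alpha(\Omega)^{-1}>0$. In the logarithmic case $\alpha=N$ (so the hypothesis reads $\dim_\H\Omega>0$, i.e. $t>0$), the same dyadic decomposition applied to $-\log|x-y|$ works: on the dangerous small scales $|x-y|<1$ the kernel grows only like $\log(1/|x-y|)$ while $\mu$ on those balls decays like $r^t$, so the series converges, $\I_N(\Omega)<+\infty$, and $\Cap_N(\Omega)=e^{-\I_N(\Omega)}>0$.

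The only genuinely nontrivial ingredient here is Frostman's lemma — the construction, by a mass-distribution argument on dyadic cubes, of a measure on $\Omega$ whose mass on every ball is controlled by a power of the radius. Since this is classical I would simply invoke it (as in \cite[Th. 3.13]{landkof}). The remaining steps are routine; the only point demanding a little care is that the estimate on $v(x)$ must hold uniformly in $x$, which is guaranteed precisely by the translation-invariant form of the Frostman bound $\mu(B_r(x))\le r^t$.
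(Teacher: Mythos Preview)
Your argument is correct and is exactly the standard route via Frostman's lemma. The paper does not supply its own proof of this proposition but simply quotes \cite[Th.~3.13]{landkof}; the proof there proceeds precisely by producing a Frostman measure of exponent $t>N-\alpha$ and checking that its Riesz energy is finite, which is what you do.
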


We now study the existence of an equilibrium measure for a conducting set subject to an external electric field $\mathbb{E}=-\nabla \phi$. 
We focus on the Coulombic case $\alpha=2$ and $N\ge 3$. For $\mu$ a (signed) measure supported on $\Omega$ with $\mu(\Omega)=0$, the electrostatic energy of $\mu$ is given by 
\[
 F(\mu):=I_2(\mu)+\int_{\Omega} \phi d\mu,
\]
while the electrostatic energy of $\Omega$ is 
\begin{equation}\label{probF}
 \F(\Omega):=\min_{\mu(\Omega)=0} F(\mu).
\end{equation}
Let us notice that in contrast with the situation of Theorem \ref{th:equmeasure}, the existence of minimizers for \eqref{probF} is not straightforward. Indeed, a bound on $I_2(\mu)$ does not give a bound on the total variation of $\mu$ (see for instance \cite[Chap. VI]{landkof}).
This prevents us from using the Direct Method of Calculus of Variations. Instead  we will directly look for a solution of the Euler-Lagrange equation and  prove that it is a minimizer of \eqref{probF}.\\
We start by recalling that \cite[Th. 1.15]{landkof}
\begin{proposition}\label{prop:positiv}
 For every signed measure $\mu$, $I_\alpha(\mu)\ge 0$ and $I_\alpha(\mu)=0$ if and only if $\mu=0$.
\end{proposition}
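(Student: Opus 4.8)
The plan is to reduce the statement to the classical fact that the Riesz kernel is positive definite, which I would exhibit through its representation as a positive superposition of Gaussians. For $0<\alpha<N$ set $\beta:=(N-\alpha)/2\in(0,N/2)$; the elementary Gamma identity $\int_0^\infty t^{\beta-1}e^{-ts}\,dt=\Gamma(\beta)\,s^{-\beta}$ with $s=|x-y|^2$ gives
\[
\frac{1}{|x-y|^{N-\alpha}}=\frac{1}{\Gamma(\beta)}\int_0^\infty t^{\beta-1}e^{-t|x-y|^2}\,dt,
\]
so that, inserting this into the definition of $I_\alpha(\mu)$ and applying Fubini--Tonelli to the positive and negative parts of $\mu$ separately,
\[
I_\alpha(\mu)=\frac{1}{\Gamma(\beta)}\int_0^\infty t^{\beta-1}\Big(\iint_{\R^N\times\R^N}e^{-t|x-y|^2}\,d\mu(x)\,d\mu(y)\Big)\,dt.
\]
Since $e^{-t|z|^2}=(4\pi t)^{-N/2}\int_{\R^N}e^{-|\xi|^2/(4t)}e^{i\xi\cdot z}\,d\xi$, for every $t>0$ one has $\iint e^{-t|x-y|^2}\,d\mu(x)\,d\mu(y)=(4\pi t)^{-N/2}\int_{\R^N}e^{-|\xi|^2/(4t)}|\widehat\mu(\xi)|^2\,d\xi\ge 0$, whence $I_\alpha(\mu)\ge 0$. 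Moreover, if $I_\alpha(\mu)=0$ then $\int_{\R^N}e^{-|\xi|^2/(4t)}|\widehat\mu(\xi)|^2\,d\xi=0$ for a.e.\ $t>0$, and since this weight is strictly positive this forces $\widehat\mu\equiv 0$, i.e.\ $\mu=0$; the converse is trivial.

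To make these manipulations licit for an arbitrary signed measure $\mu$ --- which need not be finite, and for which $I_\alpha(\mu)$ could a priori be the indeterminate form $\infty-\infty$ --- I would first establish the Plancherel-type identity $I_\alpha(\nu)=c(N,\alpha)\int_{\R^N}|\xi|^{-\alpha}|\widehat\nu(\xi)|^2\,d\xi$ for $\nu\in C^\infty_c(\R^N)$, then run the argument above on the mollifications $\mu_\eps:=\mu\ast\rho_\eps\in C^\infty_c(\R^N)$, and finally let $\eps\to 0$, using $|\widehat{\rho_\eps}|\le 1$ (so that $I_\alpha(\mu_\eps)\le I_\alpha(\mu)$) together with Fatou's lemma to get $I_\alpha(\mu_\eps)\to I_\alpha(\mu)$; the equality case transfers since $I_\alpha(\mu)=0$ implies $I_\alpha(\mu_\eps)=0$, hence $\mu_\eps=0$, for every $\eps$, hence $\mu=0$.

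The logarithmic case $\alpha=N$ goes the same way, provided one restricts --- as is natural here, since the measures relevant to $F(\mu)$ satisfy $\mu(\Omega)=0$ --- to neutral measures with $\mu(\R^N)=0$: the Frullani representation $-\log|z|=\tfrac12\int_0^\infty(e^{-t|z|^2}-e^{-t})\,\tfrac{dt}{t}$ produces an extra term proportional to $\mu(\R^N)^2$, which the neutrality hypothesis kills, reducing again to the positive-definiteness of $(x,y)\mapsto e^{-t|x-y|^2}$. I expect the only real difficulty to lie in this bookkeeping rather than in any substantive idea: the content of the statement is the elementary positive-definiteness of the Gaussian, and the work is entirely in packaging it cleanly --- tracking finiteness of the total variation and of the energy, justifying Fubini, and isolating the neutrality condition needed when $\alpha=N$. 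For the purposes of this review one may of course simply appeal to \cite[Th.~1.15]{landkof}.
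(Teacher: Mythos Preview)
The paper does not prove this proposition at all: it is simply quoted from \cite[Th.~1.15]{landkof}, as you yourself note in your final sentence. Your sketch is essentially the classical argument found there --- writing the Riesz kernel as a superposition of Gaussians and exploiting their positive-definiteness via Fourier --- so at the level of ideas there is nothing to compare.

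Two small technical points on your outline. First, writing $\mu_\eps=\mu\ast\rho_\eps\in C^\infty_c(\R^N)$ tacitly assumes $\mu$ is compactly supported with finite total variation; this is the relevant case for the paper (measures on a compact $\Omega$), but your preamble explicitly allows for $\mu$ non-finite, so the claim needs a hypothesis. Second, the inequality $I_\alpha(\mu_\eps)\le I_\alpha(\mu)$ ``from $|\widehat{\rho_\eps}|\le 1$'' presupposes the Plancherel identity for $\mu$ itself, which is what you are trying to establish; the clean route is to use the identity for $\mu_\eps$ to get $I_\alpha(\mu_\eps)=c\int|\xi|^{-\alpha}|\widehat\mu|^2|\widehat{\rho_\eps}|^2\,d\xi$, pass to the limit on the Fourier side by monotone convergence, and match this against a direct limit $I_\alpha(\mu_\eps)\to I_\alpha(\mu)$ obtained from the kernel representation under the finite-energy assumption $I_\alpha(|\mu|)<\infty$ (without which the statement is vacuous anyway). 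Your observation that the logarithmic case requires the neutrality condition $\mu(\R^N)=0$ is correct and worth flagging, since the proposition as literally stated would otherwise fail for $\alpha=N$.
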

The next proposition shows that solutions of the Euler-Lagrange equation are minimizers.
\begin{proposition}\label{prop:Eulerimplmin}
 Let $\Omega$ be a compact set. Assume that there exists  a constant $\lambda\in \R$ and  function $v$ solving
 \begin{equation}\label{eq:lambda}
\begin{cases}
-\Delta v=0 &\text{in $\R^N\setminus\Omega$}\\
v=-\frac{\varphi}{2}+\lambda&\text{in $\Omega$}\\
\lim_{|x|\to+\infty}v(x)=0,
\end{cases}
\end{equation}
and such that $\mu:= c(N,2)^{-1} (-\Delta v)$ (where $c(d,2)$ is the constant defined in Theorem \ref{th:equmeasure}) is a measure satisfying $\mu(\Omega)=0$. Then, $\mu$ is the unique minimizer of \eqref{probF}. 
\end{proposition}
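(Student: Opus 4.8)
The plan is to bypass the Direct Method, as announced, and instead exploit the strict convexity of $F$ on the affine space $\{\nu:\ \nu(\Omega)=0\}$: since $I_2$ is a nonnegative quadratic form which vanishes only at $0$ (Proposition \ref{prop:positiv}) and $\nu\mapsto\int_\Omega\phi\,d\nu$ is linear, any critical point of $F$ in this class is automatically the unique global minimizer. So the real content is to recognize that the hypotheses on $v$ say precisely that $\mu$ is such a critical point. Throughout I assume, as is implicit in the model, that $\phi$ is bounded on the compact set $\Omega$ (e.g. continuous), and I work within the classical Hilbert space of finite-energy measures on which, by Proposition \ref{prop:positiv}, $I_2(\cdot,\cdot)$ is an inner product.

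\emph{Step 1: identify $v$ with the Newtonian potential of $\mu$.} Since $v$ is harmonic off $\Omega$, the measure $\mu=c(N,2)^{-1}(-\Delta v)$ is supported in $\Omega$, and by construction $-\Delta v=c(N,2)\,\mu$ on $\R^N$. On the other hand, by Theorem \ref{th:equmeasure}(2) the Riesz potential $v_\mu(x):=\int_\Omega |x-y|^{2-N}\,d\mu(y)$ also solves $-\Delta v_\mu=c(N,2)\,\mu$, and $v_\mu(x)\to 0$ as $|x|\to\infty$ (for $N\ge 3$ this holds for the Newtonian potential of any compactly supported finite measure, cf. Theorem \ref{th:equmeasure}(3)). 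Hence $v-v_\mu$ is a harmonic function on $\R^N$ vanishing at infinity, so $v\equiv v_\mu$. In particular, integrating the identity $v=-\phi/2+\lambda$ against $\mu$ (whose support lies in $\Omega$) gives $I_2(\mu)=\int_\Omega v_\mu\,d\mu=-\tfrac12\int_\Omega\phi\,d\mu+\lambda\,\mu(\Omega)=-\tfrac12\int_\Omega\phi\,d\mu<+\infty$, so $\mu$ has finite energy and $F(\mu)<+\infty$.

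\emph{Step 2: the energy comparison.} Let $\nu$ be any admissible competitor; we may assume $F(\nu)<+\infty$, hence $I_2(\nu)<+\infty$. Put $\sigma:=\nu-\mu$, a finite-energy signed measure supported in $\Omega$ with $\sigma(\Omega)=0$. Expanding the quadratic form,
\[
F(\nu)-F(\mu)=\big(I_2(\mu+\sigma)-I_2(\mu)\big)+\int_\Omega\phi\,d\sigma
=2\,I_2(\mu,\sigma)+I_2(\sigma)+\int_\Omega\phi\,d\sigma .
\]
By Fubini (valid since $\mu,\sigma$ have finite energy) and Step 1, $I_2(\mu,\sigma)=\int_\Omega v_\mu\,d\sigma=\int_\Omega v\,d\sigma=\int_\Omega\big(-\tfrac{\phi}{2}+\lambda\big)\,d\sigma=-\tfrac12\int_\Omega\phi\,d\sigma$, the last equality because $\sigma(\Omega)=0$. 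Substituting, the two linear terms cancel and
\[
F(\nu)-F(\mu)=I_2(\sigma)\ge 0 ,
\]
with equality, by Proposition \ref{prop:positiv}, if and only if $\sigma=0$, i.e. $\nu=\mu$. This proves that $\mu$ is the unique minimizer of \eqref{probF}.

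The soft part above is genuinely soft; the only care needed is the measure-theoretic bookkeeping hidden in ``finite energy'': one must check that $\phi\in L^1(|\sigma|)$, that the mixed energy $I_2(\mu,\sigma)$ is finite and symmetric (Cauchy--Schwarz in the energy space together with Fubini), and that the pointwise identity $v=-\phi/2+\lambda$ on $\Omega$ may legitimately be integrated against $\mu$ and $\sigma$ --- which is fine because finite-energy measures do not charge sets of zero capacity, and the hypothesis is naturally read as holding $2$-q.e.\ on $\Omega$. Framing everything from the start in the Hilbert space of finite-energy measures makes the bilinear expansion in Step 2 automatic and reduces the proposition to the slogan ``a critical point of a convex functional is its minimum''.
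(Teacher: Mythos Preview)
Your proof is correct and follows essentially the same route as the paper: expand $F(\nu)-F(\mu)$ using the bilinearity of $I_2$, replace the mixed term by $\int_\Omega v\,d(\nu-\mu)$, use the boundary condition $v=-\phi/2+\lambda$ together with $(\nu-\mu)(\Omega)=0$ to kill the linear part, and conclude by Proposition~\ref{prop:positiv}. The only difference is that you make explicit in Step~1 the identification $v=v_\mu$ (via Liouville at infinity), which the paper uses silently when passing from $I_2(\mu,\nu-\mu)$ to $\int_\Omega v\,d(\nu-\mu)$; your added remarks on finite energy and q.e.\ issues are also more careful than the paper's treatment.
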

\begin{proof}
Assume that $\lambda$ and $v$ exists and let $\mu:=c(N,2)^{-1} (-\Delta v)$. To show that $\mu$ is optimal, we notice that if $\nu$ is another measure with $\nu(\Omega)=0$, then
\[
\begin{aligned}
F(\nu) &=I_2(\nu)+\int_{\Omega}\varphi \,d\nu\\
&=F(\mu)+I_2(\nu-\mu)+2\int_{\Omega\times \Omega}\frac{d(\nu-\mu)(x)\,d\mu(y)}{|x-y|^{N-2}}+\int_{\Omega}\varphi(x)\,d(\nu-\mu)(x)\\
&=F(\mu)+I_2(\nu-\mu)+\int_{\Omega}(2v+\varphi)\,d(\nu-\mu).
\end{aligned} 
\]	
Integrating the equation $2v+\varphi=\lambda$ against $\nu-\mu$ in $\Omega$ and using that $(\nu-\mu)(\Omega)=0$, we get that
\[
F(\nu)=F(\mu)+I_2(\nu-\mu).
\]
Using Proposition \ref{prop:positiv} we conclude the proof.\end{proof}

We are thus left with the construction of a constant $\lambda$ and a function $v$ satisfying the hypothesis of Proposition \ref{prop:Eulerimplmin}. In order to avoid technicalities, we will assume that $\partial \Omega$ and $\phi$ are smooth.
For later use, let $v_\Omega$ be the solution of 
\[
\begin{cases}
-\Delta v_\Omega=0 &\text{in $\R^N\setminus\Omega$}\\
v_\Omega=1&\text{in $\Omega$}\\
\lim_{|x|\to+\infty}v_\Omega(x)=0,
\end{cases}
\]
which can be readily constructed from the measure $\mu_\Omega$ which minimizes $\I_2(\Omega)$.
\begin{proposition}\label{prop:existv}
 For $N\ge 3$, let $\Omega\subset \R^N$ be a smooth compact set and let $f$ be a smooth function on $\Omega$. Then, there exists a unique solution of 
 \begin{equation}\label{eq:f}
\begin{cases}
-\Delta v=0 &\text{in $\R^N\setminus\Omega$}\\
v=f&\text{in $\Omega$}\\
\lim_{|x|\to+\infty}v(x)=0.
\end{cases}
\end{equation}
Moreover, $-\Delta v$ is a bounded measure on $\Omega$. 
\end{proposition}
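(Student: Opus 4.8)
The plan is to split $v$ into an interior part and an exterior (harmonic-extension) part and then glue them. Since $\partial\Omega$ and $f$ are smooth, first extend $f$ to a smooth compactly supported function $\tilde f$ on all of $\R^N$; say $\tilde f \in C^\infty_c(\R^N)$ with $\tilde f = f$ on a neighborhood of $\Omega$. Then the candidate solution in the exterior region $\R^N\setminus\Omega$ is the unique bounded harmonic function $w$ on $\R^N\setminus\Omega$ with boundary data $f$ on $\partial\Omega$ and $w(x)\to 0$ as $|x|\to\infty$; existence and uniqueness of $w$ is classical for smooth $\partial\Omega$ and $N\ge 3$ (solve the exterior Dirichlet problem, e.g.\ via layer potentials or via the Kelvin transform reducing to an interior problem on a bounded smooth domain, using that the fundamental solution $|x|^{2-N}$ decays at infinity). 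Define $v := \tilde f$ on $\Omega$ and $v := w$ on $\R^N\setminus\Omega$. By construction $v$ is continuous across $\partial\Omega$ (both sides equal $f$ there), $-\Delta v = 0$ in $\R^N\setminus\overline\Omega$, $v=f$ on $\Omega$, and $v\to 0$ at infinity, so the three conditions in \eqref{eq:f} hold.

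The only serious point is the last assertion, that $-\Delta v$ is a bounded measure on $\Omega$ (equivalently, supported on $\partial\Omega$, since $v$ is smooth in $\Omega^\circ$ and harmonic outside $\overline\Omega$). Here I would compute $-\Delta v$ distributionally: for $\psi\in C^\infty_c(\R^N)$, integrating by parts separately over $\Omega$ and over $\R^N\setminus\Omega$ and collecting the boundary terms on $\partial\Omega$ gives
\[
\langle -\Delta v,\psi\rangle = \int_\Omega (-\Delta \tilde f)\,\psi\,dx \;+\; \int_{\partial\Omega}\Big(\partial_\nu w - \partial_\nu \tilde f\Big)\,\psi\,d\H^{N-1},
\]
where $\nu$ is the outer normal to $\Omega$ and $\partial_\nu w$ is the (one-sided, exterior) normal derivative of $w$. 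The first term is a bounded measure because $\tilde f$ is smooth with compact support. For the second term, smoothness of $\partial\Omega$ and $f$ yields (by elliptic boundary regularity for the exterior Dirichlet problem) that $w\in C^1$ up to $\partial\Omega$, so $\partial_\nu w$ is a bounded continuous function on $\partial\Omega$; hence the jump $\partial_\nu w-\partial_\nu\tilde f$ is bounded and $\partial\Omega$ has finite $\H^{N-1}$ measure, so this boundary term is a finite measure as well. Therefore $-\Delta v$ is a bounded measure on $\Omega$ (indeed on $\partial\Omega$).

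For uniqueness, if $v_1,v_2$ both solve \eqref{eq:f} then $u:=v_1-v_2$ is harmonic in $\R^N\setminus\Omega$, vanishes on $\partial\Omega$, and tends to $0$ at infinity; by the maximum principle on the exterior domain (valid for $N\ge3$ since bounded harmonic functions vanishing at infinity attain their extrema on the finite boundary) we get $u\equiv 0$ outside $\Omega$, and $u\equiv 0$ on $\Omega$ is forced by the prescribed value $f$.

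The main obstacle is purely the regularity bookkeeping in the second step: one must know that the solution of the exterior Dirichlet problem with smooth data on a smooth boundary is $C^1$ up to the boundary, so that the normal derivative $\partial_\nu w$ is a genuine bounded function and the single-layer representation of $-\Delta v$ makes sense as a bounded measure. This is standard elliptic theory (Schauder estimates up to the boundary, applied after a Kelvin transform to reduce to a bounded domain), and everything else — the gluing, the distributional computation of the Laplacian, and the maximum-principle uniqueness — is routine.
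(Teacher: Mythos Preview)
Your proposal is correct and follows essentially the same structure as the paper: uniqueness via the maximum principle, and the same distributional integration-by-parts computation identifying $-\Delta v$ as the sum of a volume density $-\Delta f$ on $\Omega$ and a surface density (the jump of the normal derivative) on $\partial\Omega$. The only difference is in the existence step: the paper constructs the exterior harmonic extension hands-on by solving the Dirichlet problem on the bounded annuli $B_R\setminus\Omega$, uses the barriers $f_\pm v_\Omega$ (with $v_\Omega$ the capacitary potential of $\Omega$) to obtain uniform pointwise bounds and the decay at infinity, and then lets $R\to\infty$; you instead invoke the exterior Dirichlet problem as classical (layer potentials or Kelvin transform). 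The paper's route is more self-contained and uses only the objects already set up in the section, while yours is shorter but imports more standard elliptic machinery.

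One small slip to correct: your parentheticals ``equivalently, supported on $\partial\Omega$'' and ``indeed on $\partial\Omega$'' are wrong. Since $v=f$ on $\Omega$ with $f$ merely smooth (not assumed harmonic), the volume term $-\Delta f$ is in general nonzero in the interior of $\Omega$, so $-\Delta v$ is supported on all of $\Omega$, not just on $\partial\Omega$---exactly as your own formula shows. This does not affect the conclusion that $-\Delta v$ is a bounded measure.
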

\begin{proof}
 The uniqueness part of the statement follows by maximum principle. Let us turn to the existence. For $R\gg1$, let $v_R$ be the unique solution of 
 \[
\begin{cases}
-\Delta v_R=0 &\text{in $B_R\setminus\Omega$}\\
v_R=f&\text{in $\Omega$}\\
v_R=0 & \text{on $\partial B_R$}.
\end{cases}
\]
Let $f_+:=\max(\max_{\partial \Omega} f,0)$ and $f_-:=\min(\min_{\partial \Omega} f,0)$ and let $v_\pm:= f_\pm v_\Omega$. By maximum principle, $v_-\le v_R\le v_+$. Since $v_\pm \to 0$ at infinity, using elliptic regularity and letting
$R\to+\infty$ we obtain a function $v$ which satisfies \eqref{eq:f}.\\
To see  that $-\Delta v$ is a measure it is enough to notice that for  $\psi\in C^\infty_c(\R^N)$, we have 
\begin{align*}
 \int_{\R^N} -\Delta \psi v&=\int_{\R^N\backslash\Omega} -\Delta \psi v+\int_{\Omega} -\Delta \psi v\\
 &= \int_{\partial \Omega} \psi \lt[ \lt(\frac{\partial v}{\partial \nu}\rt)^+-\frac{\partial f}{\partial \nu}\rt] +\int_\Omega -\Delta f \psi,
\end{align*}
where $\nu$ is the outward normal to $\partial \Omega$ and where  $\lt(\frac{\partial v}{\partial \nu}\rt)^+$ is the exterior trace of $\frac{\partial v}{\partial \nu}$. 
\end{proof}
We can now prove our main result of this section.
\begin{theorem}
 For every smooth compact set $\Omega$ and every smooth function $\phi$ there exists a unique solution $\mu$ to \eqref{probF}.
\end{theorem}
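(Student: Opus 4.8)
The plan is to reduce the statement to Propositions \ref{prop:Eulerimplmin} and \ref{prop:existv}. By Proposition \ref{prop:Eulerimplmin} it suffices to exhibit a constant $\lambda\in\R$ and a function $v$ solving \eqref{eq:lambda} for which $\mu:=c(N,2)^{-1}(-\Delta v)$ is a (signed) measure with $\mu(\Omega)=0$; since that proposition also gives uniqueness of the minimizer of \eqref{probF}, only this construction needs to be carried out. I would obtain $v$ from a one-parameter family of solutions of \eqref{eq:f} and then fix the free parameter so as to enforce the neutrality constraint $\mu(\Omega)=0$.

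Concretely, I would first apply Proposition \ref{prop:existv} with the smooth datum $f=-\phi/2$ to produce the solution $w$ of \eqref{eq:f} with this datum, together with the fact that $-\Delta w$ is a bounded measure supported in $\Omega$. Next I record the total mass of the analogous object attached to $v_\Omega$: since $v_\Omega$ is $\I_2(\Omega)^{-1}$ times the potential of the equilibrium measure $\mu_\Omega$, Theorem \ref{th:equmeasure}(2) gives $-\Delta v_\Omega=c(N,2)\,\I_2(\Omega)^{-1}\mu_\Omega$, whence
\[
(-\Delta v_\Omega)(\Omega)=c(N,2)\,\I_2(\Omega)^{-1}\mu_\Omega(\Omega)=c(N,2)\,\Cap_2(\Omega)>0,
\]
the positivity holding because a smooth compact set has positive Lebesgue measure, so $\I_2(\Omega)<+\infty$ and $\Cap_2(\Omega)=\I_2(\Omega)^{-1}>0$ (alternatively, by Proposition \ref{capdim}). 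I then set
\[
\lambda:=-\,\frac{(-\Delta w)(\Omega)}{c(N,2)\,\Cap_2(\Omega)},\qquad v:=w+\lambda\,v_\Omega .
\]
By linearity of the Laplacian and of the boundary conditions, $v$ solves \eqref{eq:lambda}; moreover $\mu:=c(N,2)^{-1}(-\Delta v)=c(N,2)^{-1}(-\Delta w)+\lambda\,c(N,2)^{-1}(-\Delta v_\Omega)$ is a bounded signed measure on $\Omega$, and by the very choice of $\lambda$ one has $\mu(\Omega)=c(N,2)^{-1}\big[(-\Delta w)(\Omega)+\lambda(-\Delta v_\Omega)(\Omega)\big]=0$.

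Finally I would check that $\mu$ is admissible for \eqref{probF}, i.e. that $F(\mu)<+\infty$: since $v$ vanishes at infinity and solves $-\Delta v=c(N,2)\mu$, uniqueness forces $v$ to be the Riesz potential of $\mu$, hence $I_2(\mu)=\int_\Omega v\,d\mu=\int_\Omega(-\phi/2+\lambda)\,d\mu$, which is finite because $\phi$ is bounded on $\Omega$ and $\mu$ has finite total variation. Proposition \ref{prop:Eulerimplmin} then yields that $\mu$ is the unique solution of \eqref{probF}, which is the assertion. I do not anticipate a genuine obstacle once the earlier propositions are in hand: the only point requiring care is the non-degeneracy $(-\Delta v_\Omega)(\Omega)=c(N,2)\Cap_2(\Omega)\neq0$, which is exactly what makes the neutrality constraint solvable by tuning the additive constant $\lambda$, together with the routine verification that the resulting measure has finite electrostatic energy.
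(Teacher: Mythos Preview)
Your proposal is correct and follows essentially the same route as the paper: build $v=v_0+\lambda v_\Omega$ from Proposition~\ref{prop:existv} with $f=-\phi/2$, choose $\lambda$ so that the total mass of $-\Delta v$ vanishes (using that $-\Delta v_\Omega$ is a positive multiple of the probability measure $\mu_\Omega$), and conclude via Proposition~\ref{prop:Eulerimplmin}. You simply make the choice of $\lambda$ explicit and add the (welcome) check that $F(\mu)<+\infty$, which the paper leaves implicit.
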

\begin{proof}
 Let $v_0$ be the solution of \eqref{eq:f} with $f=-\frac{\phi}{2}$ and for $\lambda\in \R$ let $v_\lambda:= v_0+\lambda v_\Omega$. Since $-\Delta v_0$ is a finite measure and $-\Delta v_\Omega$ is a multiple of $\mu_\Omega$ (which is a probability measure),
we can find $\lambda$ such that $v_\lambda$ solves \eqref{eq:lambda} and $-\Delta v_\lambda (\Omega)=0$. This concludes the proof thanks to Proposition \ref{prop:Eulerimplmin}.  

\end{proof}

\section{Equilibrium shapes of charged liquid drops}\label{section2}
The study of  the equilibrium shapes of charged liquid drops started with the seminal paper of Lord Rayleigh \cite{Ray} who calculated through a linear stability analysis 
the maximal charge that a spherical drop can bear before the onset of instability. It was later observed by Zeleny \cite{Zel} that for larger charges, conical singularities  (the so-called Taylor cones) appear  together with the formation of a thin steady jet.
Since then it has been understood that the micro-drops forming the jet carry a large portion of the charge but only a small portion of the mass (see \cite{fernandezdelamora07}).
Because of its numerous applications in particular in mass spectrometry  \cite{gaskell}, this phenomenon has attracted a wide interest in the last thirty years. We refer to \cite{MurNov} for a more detailed discussion on the physical background and literature.
Let us point out that mathematically, very little is known about what happens after the onset of singularities. In particular the formation of the Taylor cones is still badly understood (see \cite{FusJul,Garcia} for some results in this direction).\\
The variational model describing the equilibrium shape of a charged liquid drop is the following. For a given charge $Q>0$ the energy of a compact set $\Omega$ is equal to 
\[
 \cE_\alpha(\Omega):= \H^{N-1}(\partial \Omega)+ Q^2 \I_\alpha(\Omega),
\]
where $\H^{N-1}$ refers to the $(N-1)-$dimensional Hausdorff measure. 
Up to a renormalization of the volume, we are   looking for a solution of 
\begin{equation}\label{eq:mainprob}
 \min_{|\Omega|=|B_1|} \cE_\alpha(\Omega).
\end{equation}
The physical case corresponds to $N=3$, with Coulombic interaction $\alpha=2$. When the charge distribution $\mu$ is taken to be uniform on $\Omega$, this problem is often called the sharp interface Ohta-Kawasaki model (or Gamow's liquid drop model). See \cite{ChMuTop} for a recent overview of this related problem.\\
The mathematical interest of \eqref{eq:mainprob} lies in the fact that there is a competition between the perimeter which is a local term minimized by the ball and the non-local electrostatic energy $\I_\alpha$ which is maximized by the ball (at least for $\alpha\le 2$ \cite{Betsakos}). Based on the experimental observations, the linear stability analysis (see in particular \cite{Ray,FonFri}) and by analogy with what is known for the Ohta-Kawasaki model,
one could expect that for small $Q$ the ball minimizes \eqref{eq:mainprob} while for large $Q$ no global minimizers exist. Surprisingly enough, this is not the case and the problem is always ill-posed when $\alpha\in (1,N)$. Roughly speaking this is due to the fact  
that the perimeter term sees objects of dimension $N-1$ while $\I_\alpha$ naturally lives on object of dimension $N-\alpha$, see Proposition \ref{capdim}. The following non-existence result has been obtained in \cite[Th. 3.2]{GNRI}. 
\begin{theorem}\label{theo:nonexist}
 For every $N\ge 2$, $\alpha\in(1,N)$ and $Q>0$,
 \[
  \inf_{|\Omega|=|B_1|} \cE_\alpha(\Omega)= \H^{N-1}(\partial B_1).
 \]
By the isoperimetric inequality, this means that \eqref{eq:mainprob} is not attained.
\end{theorem}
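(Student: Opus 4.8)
\emph{Strategy.} The lower bound is soft; the content is an explicit minimizing sequence. For the lower bound, note that $\I_\alpha(\Omega)\ge 0$ for every compact $\Omega$ (the Riesz kernel is positive), and the isoperimetric inequality gives $\H^{N-1}(\partial\Omega)\ge\H^{N-1}(\partial B_1)$ whenever $|\Omega|=|B_1|$; hence $\cE_\alpha(\Omega)\ge\H^{N-1}(\partial B_1)$. Moreover the inequality is strict, since $\I_\alpha(\Omega)>0$ as soon as $|\Omega|>0$ (e.g. $\I_\alpha(\Omega)\ge\I_\alpha(B_R)>0$ for any ball $B_R\supseteq\Omega$), which will give non-attainment once the value of the infimum is identified. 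So everything reduces to producing, for each $j$, a compact $\Omega_j$ with $|\Omega_j|=|B_1|$ and $\cE_\alpha(\Omega_j)\to\H^{N-1}(\partial B_1)$, i.e. simultaneously $\H^{N-1}(\partial\Omega_j)\to\H^{N-1}(\partial B_1)$ and $\I_\alpha(\Omega_j)\to0$.

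\emph{The construction.} The idea is exactly the scaling mismatch recorded after Proposition~\ref{capdim}: the perimeter lives on dimension $N-1$ while $\I_\alpha$ lives on dimension $N-\alpha$, and $N-\alpha<N-1$ precisely because $\alpha>1$. Accordingly I fix an exponent $\beta\in(N-\alpha,\,N-1)$ (the interval is nonempty since $\alpha\in(1,N)$), choose radii $\rho_j\to0$, set $M_j:=\lceil \rho_j^{-\beta}\rceil$, pick points $x_1,\dots,x_{M_j}\in\R^N$ that are pairwise at distance $\ge D_j$ and at distance $\ge D_j$ from the origin with $D_j\to\infty$, and let
\[
\Omega_j \;=\; \overline{B_{r_j}}\ \cup\ \bigcup_{i=1}^{M_j}\overline{B_{\rho_j}(x_i)},
\qquad r_j=\bigl(1-M_j\rho_j^{N}\bigr)^{1/N},
\]
where the choice of $r_j$ is made so that $|\Omega_j|=|B_1|$; this is legitimate for $j$ large because $M_j\rho_j^{N}\sim\rho_j^{\,N-\beta}\to0$ (so $r_j\to1$). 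In words: a slightly shrunk ball, carrying all the volume up to a vanishing amount, together with a dust of many tiny balls spread out over a huge region.

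\emph{Verification.} (i) $|\Omega_j|=|B_1|$ by construction. (ii) The $M_j+1$ closed balls are pairwise disjoint with positive mutual distances, so $\partial\Omega_j$ is the disjoint union of their boundary spheres and
\[
\H^{N-1}(\partial\Omega_j)=\bigl(r_j^{N-1}+M_j\rho_j^{N-1}\bigr)\H^{N-1}(\partial B_1);
\]
here $M_j\rho_j^{N-1}\sim\rho_j^{\,N-1-\beta}\to0$ since $\beta<N-1$, and $r_j\to1$, so $\H^{N-1}(\partial\Omega_j)\to\H^{N-1}(\partial B_1)$. (iii) By monotonicity $\I_\alpha(\Omega_j)\le\I_\alpha(T_j)$ with $T_j:=\bigcup_i\overline{B_{\rho_j}(x_i)}$, and testing $\I_\alpha(T_j)$ with the probability measure $\mu_j=\frac1{M_j}\sum_i\nu_i$, $\nu_i$ the normalized Lebesgue measure on the $i$-th ball, gives
a self-interaction term $\tfrac1{M_j}I_\alpha(\nu_1)=\tfrac{c(N,\alpha)}{M_j}\rho_j^{\alpha-N}\sim\rho_j^{\,\beta-(N-\alpha)}\to0$ (since $\beta>N-\alpha$) and a cross term $\le (D_j-2\rho_j)^{-(N-\alpha)}\to0$. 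Hence $\I_\alpha(\Omega_j)\to0$, so $\cE_\alpha(\Omega_j)\to\H^{N-1}(\partial B_1)$, which combined with the lower bound proves the identity; non-attainment follows as noted (an optimal $\Omega$ would have $\H^{N-1}(\partial\Omega)\le\H^{N-1}(\partial B_1)$, hence be a ball by isoperimetric rigidity, on which $\cE_\alpha$ is strictly larger).

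\emph{Main obstacle.} The only genuinely analytic step is estimate (iii): making rigorous that a union of $M_j$ tiny balls of radius $\rho_j$ spread far apart has Riesz $\alpha$-energy comparable to $(M_j\rho_j^{N-\alpha})^{-1}$, i.e. that its $\alpha$-capacity is essentially additive over the components. This is precisely where the separation $D_j\to\infty$ is needed, to make the interaction terms negligible; the remaining bookkeeping (perimeter and volume of disjoint balls, the isoperimetric lower bound, positivity of $I_\alpha$) is routine. If a connected competitor is preferred one may join the dust to $B_{r_j}$ by sufficiently thin tubes for $N\ge3$ without affecting any limit, but this is not needed for the statement.
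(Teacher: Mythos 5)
Your proof is correct and is essentially the paper's own argument: a dust of many tiny balls, far apart and each carrying an equal share of the charge, together with a nearly-unit charge-free ball holding the volume, with the exponent $\beta\in(N-\alpha,N-1)$ being exactly the reciprocal parametrization of the paper's choice $\beta\in((N-1)^{-1},(N-\alpha)^{-1})$ for the radius. The only differences are cosmetic: you make the paper's ``infinitely far apart'' rigorous via separations $D_j\to\infty$ and spell out the isoperimetric lower bound and non-attainment, which the paper leaves implicit.
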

\begin{proof}
 For $n\in \N$ and $\beta\in ((N-1)^{-1}, (N-\alpha)^{-1})$, let $r_n:=n^{-\beta}$. Consider the competitor $\Omega_n$  made of $n$ balls of radius
 $r_n$  each carrying a charge $n^{-1}$ and infinitely far apart together with a ball of radius $R_n\sim 1$ which is free of charge. We can then compute the energy
 \[
  \cE_\alpha(\Omega_n)= \H^{N-1}(\partial B_{R_n}) +n r_n^{N-1} \H^{N-1}(\partial B_1) +\frac{Q^2}{n} r_n^{-(N-\alpha)} \I_\alpha(B_1).
 \]
By the choice of $\beta$,
\[
\lim_{n\to +\infty} n r_n^{N-1}+n^{-1} r_n^{-(N-\alpha)}= \lim_{n\to +\infty} n^{-(\beta(N-1)-1)}+ n^{-(1-\beta (N-\alpha))}=0,
\]
which concludes the proof.
\end{proof}

Performing a more careful analysis it can be shown that   actually even local minimizers do not exist  for the Hausdorff topology (see \cite[Th. 3.4]{GNRI}). 
If the construction leading to the non-linear instability of the ball described here is made of many disconnected components, it has been proven in \cite[Th. 2]{MurNov} that (at least in the physical case $N=3$, $\alpha=2$) the ball is actually unstable
even in the class of smooth graphs over the ball.    
\begin{theorem}
 Let $N=3$ and $\alpha=2$. Then, for every $\delta>0$, there exists a smooth function $\phi_\delta \, :\, \partial B_1\to (-\delta,\delta)$ such that letting 
 \[
  \Omega_\delta:=\lt\{x \ : \ |x|\le 1+\phi_\delta\lt(\frac{x}{|x|}\rt)\rt\}
 \]
we have $|\Omega_\delta|=|B_1|$ and 
\[
 \cE_2(\Omega_\delta)<\cE_2(B_1).
\]
\end{theorem}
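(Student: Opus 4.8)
The plan is to show that adjoining a single thin \emph{finger} to the ball already lowers the energy, the gain being harvested at a vanishing transverse scale where the capacitary term beats the perimeter. Fix $Q>0$ and $\delta\in(0,1)$, set $h:=\delta/2$, and let $\rho>0$ be a small parameter chosen at the very end. Let $\phi_\delta\ge 0$ be a smooth bump supported in a spherical cap of angular radius $\sim 2\rho$ about a fixed point of $\partial B_1$, identically equal to $h$ on the cap of angular radius $\rho$; then $\Omega_\delta=\{|x|\le 1+\phi_\delta(x/|x|)\}$ is the union of $B_1$ with a finger-shaped region $S$ of length $\sim h$ and transverse size $\sim\rho$, and $\|\phi_\delta\|_\infty=h<\delta$. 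To enforce $|\Omega_\delta|=|B_1|$ I would subtract from $\phi_\delta$ a constant of size $\sim\rho^2 h$ (equivalently, replace $B_1$ by a slightly smaller concentric ball $B_{1-\eta}$ with $\eta\sim\rho^2h$); this keeps $\phi_\delta$ valued in $(-\delta,\delta)$ and, as will be clear below, perturbs $\H^{2}(\partial\,\cdot\,)$ and $\I_2(\cdot)$ only by $O(\rho^2 h)$, which is lower order. The perimeter is then easy to control: the only surplus over $\H^{2}(\partial B_1)$ comes from $S$, whose lateral surface is $\lesssim\rho h$, while the excised cap and the transition collar contribute $\lesssim\rho h$ as well, so
\[
\H^{2}(\partial\Omega_\delta)-\H^{2}(\partial B_1)\le C_1\,\rho h .
\]

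The heart of the argument is a quantitative lower bound on $\Cap_2(\Omega_\delta)-\Cap_2(B_1)$ (equivalently an upper bound on $\I_2(\Omega_\delta)-\I_2(B_1)<0$), for which I would use that the capacitary potential $u_\Omega$ (equal to $1$ on $\Omega$, harmonic in the complement, vanishing at infinity) satisfies $\int_{\R^3}|\nabla u_\Omega|^2=c_N\,\Cap_2(\Omega)$ for a dimensional constant $c_N$, a consequence of Theorem~\ref{th:equmeasure}. Writing $w:=u_{\Omega_\delta}-u_{B_1}$ and recalling $u_{B_1}(x)=1/|x|$ outside $B_1$, we have $w\equiv 0$ on $B_1$ (both potentials equal $1$ there), and the maximum principle gives $w\ge 0$ everywhere. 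Expanding $\int|\nabla u_{\Omega_\delta}|^2$, the cross term is a multiple of $\int w\,d\mu_{B_1}$, which \emph{vanishes} because $\mu_{B_1}$ is carried by $\partial B_1$ by Theorem~\ref{th:equmeasure}(5), where $w=0$. Hence
\[
c_N\bigl(\Cap_2(\Omega_\delta)-\Cap_2(B_1)\bigr)=\int_{\R^3}|\nabla w|^2 .
\]
Now on the outer half $S^{\mathrm{top}}:=S\cap\{|x|\ge 1+h/2\}$ of the finger one has $w=1-1/|x|\ge h/3$, so $3w/h$ is admissible for the Dirichlet problem defining $c_N\Cap_2(S^{\mathrm{top}})$, giving $\int|\nabla w|^2\ge\tfrac{h^2}{9}\,c_N\,\Cap_2(S^{\mathrm{top}})$. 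Since $S^{\mathrm{top}}$ is, up to constants, a cylinder of length $\sim h$ and radius $\sim\rho$, testing $\I_2$ with the normalized Lebesgue measure on it yields the classical estimate $\I_2(S^{\mathrm{top}})\lesssim h^{-1}\log(h/\rho)$, i.e. $\Cap_2(S^{\mathrm{top}})\gtrsim h/\log(h/\rho)$. Combining, and using $\Cap_2(B_1)\le\Cap_2(\Omega_\delta)\le\Cap_2(B_{1+h})$ to pass back to $\I_2$ (and absorbing the $O(\rho^2h)$ volume-correction error),
\[
\I_2(B_1)-\I_2(\Omega_\delta)\ \ge\ c_2\,\frac{h^3}{\log(h/\rho)} .
\]

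Putting the two estimates together and recalling $h=\delta/2$,
\[
\cE_2(\Omega_\delta)-\cE_2(B_1)\ \le\ C_1\,\rho h\ -\ c_2\,Q^2\,\frac{h^3}{\log(h/\rho)} ,
\]
and since $h$ is now fixed while $\rho\log(1/\rho)\to 0$ as $\rho\to 0^+$, the right-hand side is strictly negative once $\rho$ is small enough (depending on $\delta$ and $Q$). Fixing such a $\rho$ produces the desired $\phi_\delta$ and finishes the proof. The step I expect to be the crux is the quantitative capacity gain: a plain second-variation computation with a single spherical harmonic $Y_\ell$ only pits a perimeter cost $\sim\ell^2$ against a capacitary gain that stays bounded in $\ell$, hence merely gives instability for $Q$ large — one must instead extract the gain at the scale $\rho\to 0$, where $\Cap_2$ (homogeneous of degree $N-2=1$) dominates the perimeter (homogeneous of degree $N-1=2$), and the needle-capacity estimate above is exactly what makes this precise. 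The remaining points — that $\Omega_\delta$ is a genuine smooth graph, and that the mollification and the volume correction are of the claimed orders — are elementary but should be checked.
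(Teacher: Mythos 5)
A caveat first: the paper does not actually prove this statement, it quotes it from \cite[Th.~2]{MurNov}, so the comparison below is with the original argument rather than with an in-paper proof. Your proposal is correct, and its mechanism is exactly the one behind the cited result: attach to the ball a protrusion of fixed height $h\sim\delta$ but vanishing transverse scale $\rho$, so that the capacitary gain, of order $h^3/\log(h/\rho)$, beats the perimeter cost $\les \rho h$ as $\rho\to 0$, for any fixed $Q>0$. Where you differ is the technical route to the gain: in \cite{MurNov} the bound on $\I_2(\Omega_\delta)$ is obtained, in essence, by testing with a measure that moves a small fraction $t$ of the charge onto the tip of the protrusion and optimizing in $t$ (which again yields a gain $\sim h^2\,\Cap_2(\textrm{tip})\sim h^3/\log(h/\rho)$), whereas you pass through capacitary potentials, using the identity $c_N\bigl(\Cap_2(\Omega)-\Cap_2(B)\bigr)=\int_{\R^3}|\nabla(u_\Omega-u_B)|^2$ for nested sets $B\subset\Omega$ (the cross term vanishes because $\mu_B$ is carried by $\partial B$, where the difference of potentials is zero), combined with the classical needle estimate $\Cap_2(S^{\mathrm{top}})\ges h/\log(h/\rho)$. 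Both routes are sound and give the same scaling, so your argument stands as a self-contained proof. Two points to make airtight when writing it up: the nestedness needed for the cross-term cancellation is spoiled by the volume correction, so the identity should be run with the base ball $B_{1-\eta}$, $\eta\sim\rho^2 h$, and the error $\Cap_2(B_1)-\Cap_2(B_{1-\eta})=\eta$, which is $o\bigl(h^3/\log(h/\rho)\bigr)$ as $\rho\to0$, absorbed; and the perimeter surplus should be checked via the radial-graph area formula, where $|\nabla\phi_\delta|\les h/\rho$ on the transition annulus of area $\les\rho^2$ indeed gives the claimed $O(\rho h)$.
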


In the case $\alpha\in (0,1]$ one can expect a stronger interaction between both terms in \eqref{eq:mainprob} which might restore well-posedness. 
This has been recently investigated in \cite{MurNovRuf} in the case $N=2$, $\alpha=1$, which 
 corresponds to three dimensional drops trapped between two very close  isolating plates. The authors were able to completely solve this problem
 \begin{theorem}
  Let $N=2$ and $\alpha=1$. There exists an explicit $\overline{Q}$ such that for $Q\le \overline{Q}$, the only minimizer of \eqref{eq:mainprob} is given by the unit disk while for $Q>\overline{Q}$, there are no minimizers.
 \end{theorem}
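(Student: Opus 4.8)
\emph{Proof strategy.} The plan is to derive both halves of the statement from a single sharp inequality: the unit disk minimizes the scale-invariant product $\H^1(\partial\Omega)\,\I_1(\Omega)$ among compact sets $\Omega\subset\R^2$ of positive measure and finite perimeter, with equality only for balls, i.e.
\[
\H^1(\partial\Omega)\,\I_1(\Omega)\ \ge\ \H^1(\partial B_1)\,\I_1(B_1)=2\pi\,\I_1(B_1).
\]
(This is plausible precisely because $N-\alpha=1$ equals the dimension of $\partial\Omega$: by Proposition~\ref{capdim} a codimension-one set lies at the critical dimension for $\Cap_1$, so flattening $\Omega$ drives $\Cap_1$ to zero rather than up -- the very borderline that made the problem well posed here but ill posed for $\alpha\in(1,N)$ in Theorem~\ref{theo:nonexist}.) Granting this, set $c:=\I_1(B_1)$ and $\overline Q:=\sqrt{2\pi/c}$, which is explicit since $c$ is the Riesz energy of the unit disk.

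\emph{The regime $Q\le\overline Q$.} Let $\Omega$ be admissible and put $P:=\H^1(\partial\Omega)$; the isoperimetric inequality gives $P\ge 2\pi$ because $|\Omega|=|B_1|=\pi$, while the key inequality gives $\I_1(\Omega)\ge 2\pi c/P$. Hence $\cE_1(\Omega)\ge P+2\pi cQ^2/P=:h(P)$, and a one-variable computation shows that $h$ is nondecreasing on $[2\pi,+\infty)$ exactly when $Q\le\overline Q$. Therefore $\cE_1(\Omega)\ge h(2\pi)=2\pi+cQ^2=\cE_1(B_1)$; equality forces $P=2\pi$, hence $\Omega=B_1$ by the rigidity of the isoperimetric inequality. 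Thus $B_1$ is the unique minimizer.

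\emph{The regime $Q>\overline Q$.} For the upper bound on the infimum I imitate Theorem~\ref{theo:nonexist}, but with \emph{bounded} components: let $\Omega_n$ be $k$ disjoint disks of radii $r_1,\dots,r_k$ with $\sum_i r_i^2=1$ (so $|\Omega_n|=\pi$) at mutual distance $n\to+\infty$. The cross interaction energies vanish, so
\[
\limsup_{n\to\infty}\I_1(\Omega_n)\ \le\ \Big(\sum_i\Cap_1(B_{r_i})\Big)^{-1}=\frac{c}{S},\qquad S:=\sum_i r_i,
\]
whence $\inf\cE_1\le 2\pi S+Q^2c/S$ for every $S\in[1,\sqrt k]$. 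Since $k$ is arbitrary and $s\mapsto 2\pi s+Q^2c/s$ is minimized at $s^\ast=Q\sqrt{c/(2\pi)}>1$ (using $Q>\overline Q$), we get $\inf\cE_1\le 2Q\sqrt{2\pi c}$. In the other direction, the arithmetic--geometric mean inequality together with the key inequality yields, for every admissible $\Omega$,
\[
\cE_1(\Omega)=P+Q^2\I_1(\Omega)\ \ge\ 2Q\sqrt{P\,\I_1(\Omega)}\ \ge\ 2Q\sqrt{2\pi c},
\]
and equality would require both $P=Q^2\I_1(\Omega)$ and equality in the key inequality, forcing $\Omega$ to be a ball -- but the only admissible ball is $B_1$, for which $P=2\pi\ne Q^2c$ since $Q\ne\overline Q$. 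Hence $\inf\cE_1=2Q\sqrt{2\pi c}$ is not attained.

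\emph{The main obstacle.} Everything above is bookkeeping; the work is entirely in the key inequality, and this is the step I expect to be hard. It does not follow by merely combining the isoperimetric inequality with the maximality of the ball for $\I_\alpha$ at fixed volume (\cite{Betsakos}), since those two facts push $\H^1(\partial\Omega)$ and $\I_1(\Omega)$ in opposite directions. I would attempt it either via a two-point rearrangement (polarization), showing that polarization does not increase $\H^1(\partial\Omega)/\Cap_1(\Omega)$ -- the delicate point being the monotonicity of the Riesz capacity $\Cap_1$ under polarization -- or, more robustly, by the direct method: prove the scale-invariant functional $\H^1(\partial\Omega)/\Cap_1(\Omega)$ has a minimizer among connected compact subsets of $\R^2$ (the borderline scaling $\dim\partial\Omega=N-\alpha$ being exactly what prevents loss of compactness through flattening or splitting), then write down its Euler--Lagrange equation and use the structure of the equilibrium potential from Theorem~\ref{th:equmeasure} to conclude that any minimizer has constant curvature, hence is a disk. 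A purely local version of the inequality -- read off from the sign of the second variation of $\cE_1$ at $B_1$ -- would only give that $B_1$ is a local minimizer for $Q<\overline Q$, so a genuinely global argument is needed.
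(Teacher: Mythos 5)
Your reduction is clean and the bookkeeping is correct: the scale-invariant inequality $\H^1(\partial\Omega)\,\I_1(\Omega)\ge 2\pi\,\I_1(B_1)$, with equality only for disks, does imply both regimes exactly as you say (your threshold $\overline Q=\sqrt{2\pi/\I_1(B_1)}$ is forced by the dichotomy, and your upper-bound construction with far-apart disks and the AM--GM lower bound are fine). But this is not a proof of the theorem: the key inequality you postulate is, after scaling, \emph{equivalent} to the statement that balls are the unique global minimizers of the unconstrained energy $\H^1(\partial\Omega)+Q^2\I_1(\Omega)$, which is precisely the core result of \cite{MurNovRuf} and carries essentially all of the difficulty. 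You acknowledge this, but the two strategies you sketch for it are not convincing. In particular, the polarization route runs into a sign problem: two-point rearrangement (like symmetrization) does not increase perimeter, but it also tends to \emph{decrease} Riesz capacity, i.e.\ to \emph{increase} $\I_1$ --- this is the same phenomenon as the ball being a maximizer of $\I_1$ at fixed volume \cite{Betsakos} --- so neither factor monotonicity nor any obvious monotonicity of the ratio $\H^1(\partial\Omega)/\Cap_1(\Omega)$ is available, and the product can a priori move either way under a polarization step. The direct-method alternative (existence of a minimizer of the scale-invariant ratio among connected compacta, then an Euler--Lagrange/overdetermined argument forcing constant curvature) is a research program, not a step: compactness at the critical scaling, regularity sufficient to write the first variation of the capacitary term, and the rigidity conclusion are each nontrivial.

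For comparison, the paper's proof (following \cite{MurNovRuf}) obtains exactly your key inequality by a different and genuinely two-dimensional mechanism: drop the volume constraint; observe that for a connected planar set the energy decreases under convexification (the convex hull has smaller perimeter and, by inclusion, smaller $\I_1$); among convex sets, exploit that the perimeter is linear and the Riesz $1$-capacity is superadditive (Brunn--Minkowski type) with respect to Minkowski sums to identify the ball as the global minimizer; finally reduce a union of convex components to a union of balls and then to a single ball. The two constrained regimes are then deduced by comparison with the unconstrained minimizer, much as in your "bookkeeping" paragraphs (the paper's $Q<Q(1)$ computation uses the maximality of $\I_1(B_1)$ at fixed volume rather than your $h(P)$ monotonicity, but these are interchangeable). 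So the gap in your proposal is concrete: the one inequality you rely on is the theorem in disguise, and the arguments you propose for it either point in the wrong direction (polarization) or are far from complete (direct method); the convexification/Minkowski-sum argument is the missing ingredient.
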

\begin{proof}
A rough  idea of the proof is the following: the crucial observation is that in dimension two, the energy of a connected set decreases under convexification. Of course this operation does not preserves the volume. 
This motivates dropping the volume constraint and   studying  the global minimizer of the energy. By the above observation, a global  minimizer is made of a union of convex sets. 
Using the linearity  (respectively the sub-linearity) of the perimeter  (respectively of the Riesz capacity) with respect to the Minkowski sum,
it can be shown that the ball is the global minimizer of the energy amongst convex sets.  The global minimizer is thus made of a union of balls from which it is readily seen that it is actually a single ball.
Therefore, for any radius $R>0$ there exists a charge $Q(R)>0$ such that the ball of radius $R$ is the only minimizer (up to translation) of
\[
\min \mathcal E_1^{Q(R)}(\Omega).
\]
Here we adopted the notation $\mathcal E_1^{Q(R)}(\Omega)$ instead of $\mathcal E_1(\Omega)$ just to emphasize the dependence on the charge.  
Let $Q(1)$ be the charge associated to $B_1$. If $Q>Q(1)$, then using a construction similar to the one used in the proof Theorem \ref{theo:nonexist}, it is possible to prove non-existence of a minimizer while for $Q<Q(1)$,
since for every $\Omega$ with $|\Omega|=|B_1|$,
\[
\begin{aligned}
\mathcal E^Q_1 (\Omega) &= \mathcal E^{Q(1)}_{1} (\Omega) - (Q(1)^2 -
Q^2) \I_1(\Omega) 
\\
&\geq 
\mathcal E^{Q(1)}_{1}(  B_1) -
(Q(1)^2 - Q^2) \I_1(\Omega)  
\\
&\geq \mathcal E^{Q(1)}_{1}(  B_1) -
(Q(1)^2 - 
Q^2) \I_1( B_1) \\
&= \mathcal E^Q_1( B_1),
\end{aligned}
 \]
 with equality  if and only if $\Omega=B_1$, we obtain that the ball $B_1$ is the unique solution of \eqref{eq:mainprob}. Notice that in the second inequality we used  that the ball is a maximizer of $\mathcal I_1$ under volume constraint.
 \end{proof}

Turning back to the case $\alpha>1$ where \eqref{eq:mainprob} is ill-posed, it is natural to wonder if restricting the admissible set  
could restore well-posedness. A first possibility, explored in \cite{GNRI} is to add a strong constraint on the curvature. For $\delta>0$, we say that a set $\Omega$ satisfies the $\delta-$ball condition 
if for every $x\in \partial \Omega$ there are two balls of radius $\delta$ touching at $x$, one of which is contained in $\Omega$ and the other one which is contained in $\Omega^c$. Notice that this implies in particular that $\partial \Omega$ is $C^{1,1}$ with all the curvatures bounded by $\delta^{-1}$.  
We set
\[
\mathcal A_\delta:=\left\{\Omega\subset\R^N\,:\,|\Omega|=|B_1|,\,\,\text{$\Omega$ satisfies the $\delta-$ball condition}  \right\}.
\]
 Under this regularity assumption, it was proven in \cite[Th. 4.3]{GNRI}  that minimizers exist for small enough charges while non-existence for large charges holds in the case $\alpha>N-1$ \cite[Th. 4.5]{GNRI}.
 \begin{proposition}
  For every $N\ge 2$ and $\alpha \in (0,N)$, there exists $Q_0(N,\alpha)>0$ such that for every $\delta$ small enough and every $Q<Q_0 \delta^N$ a minimizer of
  \begin{equation}\label{prob:curvature}
   \min_{\Omega\in \mathcal{A}_\delta} \cE_\alpha(\Omega)
  \end{equation}
exists. Moreover, if $\alpha>N-1$, there exists $Q_1(N,\alpha)>0$ such that for every $\delta$ small enough and every $Q>Q_0 \delta^{-((N-\alpha)(N-1)+1)/2}$, no minimizer of \eqref{prob:curvature} exists. 
 \end{proposition}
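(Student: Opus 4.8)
The two assertions are independent; I treat existence and non-existence in turn.

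\textbf{Existence for $Q<Q_0\delta^N$.} The plan is the Direct Method, the decisive point being that the $\delta$-ball condition is stable under the relevant convergence and carries enough compactness. Since $B_1\in\A_\delta$ once $\delta\le 1$, we have $\inf_{\A_\delta}\cE_\alpha\le\cE_\alpha(B_1)=\H^{N-1}(\partial B_1)+Q^2\Ia(B_1)<+\infty$. Given a minimizing sequence $(\Omega_n)\subset\A_\delta$, the first step is to rule out loss of compactness. A priori only the perimeters are bounded, and for a disconnected set the $\delta$-ball condition does not bound the diameter, so the real danger --- which is exactly the mechanism behind the non-existence part below --- is dispersal of $\Omega_n$ into far-apart pieces; the smallness $Q<Q_0\delta^N$ is what makes such dispersal energetically disadvantageous. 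Concretely, comparing with the single ball forces $\H^{N-1}(\partial\Omega_n)$ to stay so close to $\H^{N-1}(\partial B_1)$ that, by the (quantitative) isoperimetric inequality, $\Omega_n$ is $L^1$-close to a ball, hence connected and, using the $\delta$-ball condition, of uniformly bounded diameter after a translation. On such an equibounded family the $\delta$-ball condition yields uniform $C^{1,1}$ local graph parametrizations of $\partial\Omega_n$, so up to a subsequence $\Omega_n\to\Omega_\infty$ in the Hausdorff sense, with $C^{1,\beta}$ convergence of the boundaries; the $\delta$-ball condition passes to the limit and the uniform interior/exterior density bounds give $|\Omega_\infty|=|B_1|$, so $\Omega_\infty\in\A_\delta$. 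It remains to check that $\cE_\alpha$ is lower semicontinuous along this convergence: $\Omega\mapsto\H^{N-1}(\partial\Omega)$ is in fact continuous for $C^{1,1}$-converging boundaries, while $\Omega\mapsto\Ia(\Omega)=\Cap_\alpha(\Omega)^{-1}$ is continuous under Hausdorff convergence of uniformly fat sets --- one compares the equilibrium potentials using Theorem~\ref{th:equmeasure}, the uniform thickness excluding any loss of capacity. Hence $\cE_\alpha(\Omega_\infty)\le\liminf_n\cE_\alpha(\Omega_n)=\inf_{\A_\delta}\cE_\alpha$ and $\Omega_\infty$ is a minimizer.

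\textbf{Non-existence for $\alpha>N-1$ and $Q$ large.} The mechanism is that, in contrast with the unconstrained setting of Theorem~\ref{theo:nonexist}, a set in $\A_\delta$ can no longer be made of vanishingly small pieces, but it can still be broken into many separated balls of radius exactly $\delta$, and for $\alpha>N-1$ this is the capacitively efficient way to spread out: a maximal packing of $\sim\delta^{-N}$ such balls has total $\alpha$-capacity $\sim\delta^{-\alpha}$, which beats the capacity $\sim\delta^{-(N-1)(N-\alpha)}$ of the longest admissible connected filament precisely because $\alpha>(N-1)(N-\alpha)$ --- equivalently, because a one-dimensional skeleton has Hausdorff dimension $1>N-\alpha$ and hence positive $\alpha$-capacity, cf. Proposition~\ref{capdim}. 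When $Q$ is large the electrostatic term dominates and, since separated equally charged balls repel one another, the energy can always be strictly decreased by pulling such a configuration further apart; so the infimum over $\A_\delta$ is realized only in the limit of $\delta$-scale dust escaping to infinity and is attained by no bounded set. To turn this into a proof I would argue by contradiction. Given a minimizer $\Omega^*\in\A_\delta$, either $\Omega^*$ already is the maximal $\delta$-ball dust, in which case moving one of its balls off to infinity decreases the energy at no perimeter cost; or $\Omega^*$ has a component of radius $\ge 2\delta$, which one shrinks slightly --- remaining in $\A_\delta$ --- to free a volume $\sim\delta^N$, then places this volume as a disjoint ball $B_\delta$ far away. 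The perimeter changes by at most $C(N)\delta^{N-1}$, while, capacities of well-separated sets adding with negligible interaction, the total $\alpha$-capacity grows by at least $\Cap_\alpha(B_\delta)=\delta^{N-\alpha}\Cap_\alpha(B_1)>0$, so the electrostatic energy drops by at least $\sim Q^2\delta^{N-\alpha}\Cap_\alpha(\Omega^*)^{-2}$; hence $\cE_\alpha$ strictly decreases as soon as $Q^2\gtrsim\delta^{\alpha-1}\Cap_\alpha(\Omega^*)^2$, contradicting minimality.

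It thus remains to bound $\Cap_\alpha(\Omega^*)$ from above. The bare bound over $\A_\delta$, namely $\Cap_\alpha(\Omega^*)\lesssim\delta^{-\alpha}$ (the maximal $\delta$-ball dust being extremal for $\alpha>N-1$), already closes the contradiction for $Q$ above a fixed power of $\delta$; to reach the stated exponent one exploits in addition that $\Omega^*$ is a minimizer --- comparing $\Omega^*$ with the $\delta$-ball dust competitor bounds both $\H^{N-1}(\partial\Omega^*)$ and $Q^2\Ia(\Omega^*)$, and a quantitative trade-off (a set of $\A_\delta$ with large $\alpha$-capacity is forced to be spread out at scale $\delta$ and hence to carry a correspondingly large perimeter) converts this into a sharper upper bound on $\Cap_\alpha(\Omega^*)$. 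Tracking the powers of $\delta$ in $Q^2\gtrsim\delta^{\alpha-1}\Cap_\alpha(\Omega^*)^2$ then yields non-existence for $Q>Q_1(N,\alpha)\,\delta^{-((N-\alpha)(N-1)+1)/2}$. I expect this last bookkeeping to be the main obstacle: one must control $\Cap_\alpha(\Omega^*)$ sharply enough not to degrade the exponent, using no more about the geometry of a putative minimizer than the a priori energy bound supplies. The hypothesis $\alpha>N-1$ is precisely what makes the thin $\delta$-scale appendages visible to the $\alpha$-capacity, and what makes the perimeter-versus-capacity balance fall in favour of the construction.
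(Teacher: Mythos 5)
Your existence half is essentially the paper's argument: compare with $B_1$, use the quantitative isoperimetric inequality to get $|\Omega_n\Delta B_1|\les Q$, deduce connectedness from the $\delta$-ball condition when $Q\ll\delta^N$, and then conclude by compactness of uniformly $C^{1,1}$ boundaries; the extra semicontinuity details you supply are standard (in fact lower semicontinuity of $\I_\alpha$ under Hausdorff convergence is automatic, no uniform fatness needed), so this part is fine.

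The non-existence half is where the problems lie, and they are genuine. First, your surgery is not admissible as described: the step ``shrink a component slightly, remaining in $\A_\delta$, to free a volume $\sim\delta^N$'' fails because the $\delta$-ball condition is not preserved under contraction (a dilation by $\lambda<1$ only yields the $\lambda\delta$-ball condition, and a minimizer may well have boundary points where the interior tangent ball has radius exactly $\delta$, regardless of the size of the component); since the volume constraint is exact, you cannot add the far-away ball $B_\delta$ without a volume-removing modification that provably stays in $\A_\delta$, and you give none. (Also, your dichotomy ``maximal $\delta$-dust or a component of radius $\ge 2\delta$'' is not exhaustive, and ``moving a ball to infinity strictly decreases the energy'' needs an argument -- e.g.\ a component carrying zero equilibrium charge is not excluded a priori.) Second, and more importantly, you do not reach the stated threshold: with the crude bound $\Cap_\alpha(\Omega^*)\les\delta^{-\alpha}$ your criterion $Q^2\ges\delta^{\alpha-1}\Cap_\alpha(\Omega^*)^2$ only gives non-existence for $Q\ges\delta^{-(\alpha+1)/2}$, which (since $\alpha>(N-1)(N-\alpha)$ exactly when $\alpha>N-1$) is a strictly smaller range than $Q>Q_1\delta^{-((N-\alpha)(N-1)+1)/2}$. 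The ``quantitative trade-off'' you defer is precisely the heart of the quantitative claim, and it is where the paper's route differs from yours: one compares a putative minimizer with the explicit competitor made of $\sim\delta^{-N}$ balls of radius $\delta$, which bounds its energy, hence its perimeter and its capacity, and then a lower bound on $\I_\alpha$ in terms of the number and diameters of the components (each component of a set in $\A_\delta$ has perimeter $\ges\delta^{N-1}$ and diameter $\les$ perimeter$/\delta^{N-2}$) produces the exponent $((N-\alpha)(N-1)+1)/2$. As written, your proposal proves a weaker non-existence statement and leaves the claimed one open.
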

\begin{proof}
 For the existence part the main point is to  prove that every minimizing sequence $\Omega_n$ must be connected for $Q<Q_0 \delta^N$. By (almost) minimality, we have 
 \begin{equation}\label{eq:basicquant}
  \H^{N-1}(\partial \Omega_n)-\H^{N-1}(\partial B_1)\le Q^2 (\I_\alpha(B_1)-\I_\alpha(\Omega_n)).
 \end{equation}
The quantitative isoperimetric inequality \cite{FuMaPra} then implies that $|\Omega_n\Delta B_1|\les Q$. Thanks to the $\delta-$ball condition, this yields that $\Omega_n$  is indeed connected for $Q\ll \delta^N$.\\
The non-existence part is obtained by constructing a competitor made of $\delta^{-N}$ balls of radius $\delta$.
\end{proof}
In the Coulombic case $\alpha=2$, it was shown in \cite[Th. 5.6]{GNRI} (see also \cite[Cor. 6.4]{GNRI} for the logarithmic case when $N=2$) that for small enough charges, the ball is the unique minimizer of \eqref{prob:curvature}.
\begin{theorem}\label{theorem1}
	Let $N\ge 2$ and  $\alpha=2$. Then there exists $Q_0(N,\delta)$ such that for $Q\le Q_0$,  $B_1$ is the only minimizer (up to translation) of problem \eqref{prob:curvature}.
	\end{theorem}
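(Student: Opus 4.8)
The plan is to prove a quantitative stability inequality for $B_1$ inside a $C^1$-neighborhood, and then to show by compactness that every minimizer of \eqref{prob:curvature} lies in that neighborhood once $Q$ is small; throughout we use that $\cE_2$ is translation invariant, so minimizers may be re-centered freely. First I would note that for $\delta\le 1$ the ball $B_1$ belongs to $\mathcal A_\delta$, so $\cE_2(\Omega)\le\cE_2(B_1)$ for any minimizer $\Omega$ (which exists for $Q$ below the threshold of the preceding Proposition); this is precisely \eqref{eq:basicquant} with $\Omega$ in place of $\Omega_n$, and since $\I_2(B_1)-\I_2(\Omega)$ is bounded by a constant depending only on $N$ and $\delta$, the perimeter deficit of $\Omega$ is $O(Q^2)$. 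By the quantitative isoperimetric inequality \cite{FuMaPra} this forces $|\Omega\Delta B_1|\les Q$ after a translation, and because $\Omega$ satisfies the $\delta$-ball condition --- hence enjoys uniform density estimates and a uniform curvature bound --- this $L^1$-closeness upgrades, by a compactness argument, to $C^1$-closeness of $\partial\Omega$ to $\partial B_1$. Using also that minimizers are connected for $Q\ll\delta^N$, I may write $\partial\Omega=\{(1+u(x))\,x:x\in\partial B_1\}$ with $\|u\|_{C^1(\partial B_1)}\to0$ as $Q\to0$ and $\|u\|_{W^{2,\infty}}\les\delta^{-1}$; after one further translation I may assume the barycenter of $\Omega$ is the origin, so that the spherical-harmonic modes of $u$ of degrees $0$ and $1$ are quadratically small in $\|u\|_{L^2}$ (degree $0$ from the volume constraint, degree $1$ from the centering).

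Next I would expand both terms of the energy to second order in $u$. For the perimeter, Fuglede's inequality gives $\H^{N-1}(\partial\Omega)-\H^{N-1}(\partial B_1)\ge c(N)\,\|u\|_{H^1(\partial B_1)}^2$ once $\|u\|_{C^1}$ is small enough: the volume and centering constraints remove the two lowest modes, and the spectral gap of $-\Delta_{\partial B_1}$ (eigenvalues jumping from $N-1$ to $2N$) makes the remaining second-variation form coercive. For the capacitary term, since $B_1$ maximizes $\I_2$ under the volume constraint (valid for $\alpha\le 2$, \cite{Betsakos}) its first variation vanishes there, and the second variation of the Riesz (Newtonian) energy at the ball is an order-$1$ quadratic form in the normal displacement; bounding its remainder by means of $\|u\|_{C^1}$ small and $\|u\|_{W^{2,\infty}}\les\delta^{-1}$ yields $|\I_2(B_1)-\I_2(\Omega)|\les\|u\|_{H^{1/2}(\partial B_1)}^2$ (any bound no stronger than $\|u\|_{H^1}^2$ would suffice below).

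The conclusion then follows by combining the two expansions:
\[
\cE_2(\Omega)-\cE_2(B_1)=\big(\H^{N-1}(\partial\Omega)-\H^{N-1}(\partial B_1)\big)+Q^2\big(\I_2(\Omega)-\I_2(B_1)\big)\ge\big(c(N)-CQ^2\big)\|u\|_{H^1(\partial B_1)}^2.
\]
Choosing $Q_0(N,\delta)$ small enough --- so that minimizers exist, so that the reduction above brings $\|u\|_{C^1}$ below the thresholds required by Fuglede's inequality and the capacitary expansion, and so that $Q_0^2<c(N)/C$ --- we obtain that for $Q\le Q_0$ the right-hand side is nonnegative and vanishes only when $u\equiv0$. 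Since $\Omega$ is a minimizer and $B_1$ is admissible, the left-hand side is $\le 0$, hence $u\equiv 0$ and $\Omega$ equals $B_1$ up to the translations performed above; in particular $B_1$ is itself a minimizer.

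I expect the crux to be the quantitative capacitary expansion: one has to represent the capacitary potential of $\Omega$ as a controlled perturbation of that of $B_1$ on the unbounded exterior domain, exploit that the first variation of $\I_2$ vanishes only because of the volume constraint, and bound the second-order remainder uniformly over $\mathcal A_\delta\cap\{\|u\|_{C^1}\le\eps_0\}$ by a Sobolev norm of $u$ that is no stronger than the $H^1$-norm furnished by Fuglede's inequality. The perimeter expansion is classical, and the rigidity reduction is routine given the $\delta$-ball condition.
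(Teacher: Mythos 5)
Your proposal follows essentially the same route as the paper: reduce, for $Q$ small, to minimizers that are nearly spherical graphs over $\partial B_1$ (via minimality, the quantitative isoperimetric inequality and the $\delta$-ball condition), use Fuglede's expansion for the perimeter, and control the capacity deficit by a quadratic quantity that is absorbed for small $Q$; the capacitary estimate you single out as ``the crux'' is precisely the paper's Lemma \ref{lem:tech}, whose hypothesis (an $L^\infty$ bound on the optimal measure) is supplied by the $\delta$-ball condition exactly as you invoke your $W^{2,\infty}$ bound. So the approach matches the paper's, and is correct modulo that technical lemma, which the paper itself only cites from \cite{GNRI}.
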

The proof of this result is quite long and involved but the basic idea is to argue as in \cite{CicLeo,KM2,figalli15, GolMer} for instance and  show that for small charges  minimizers
 are nearly spherical sets, that is small Lipschitz graphs over $\partial B_1$. This allows the use of   a Taylor
expansion of the perimeter for this type of sets given by Fuglede \cite{fuglede}. The main technical lemma is the following (see \cite[Prop. 5.5]{GNRI} and the proof of \cite[Th. 5.6]{GNRI}).
\begin{lemma}\label{lem:tech}
 For $N\ge 2$ and $\alpha=2$, if $\Omega$ is a nearly spherical set and if the optimal measure $\mu$ is bounded in $L^\infty(\partial \Omega)$, then there exists a constant $C$ depending on this $L^\infty$ bound such that 
 \begin{equation}\label{eq:quantcurvature}
  \I_2(B_1)-\I_2(\Omega)\le C (\H^{N-1}(\partial\Omega)-\H^{N-1}(\partial B_1)).
 \end{equation}
\end{lemma}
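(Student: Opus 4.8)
My plan is to deduce \eqref{eq:quantcurvature} from a quantitative comparison of Newtonian capacities, which I would in turn prove by testing the variational problem for $\Cap_2(\Omega)$ against a competitor built from the capacitary potential of $B_1$, and then combine it with Fuglede's expansion of the perimeter. Since $\Cap_2=1/\I_2$ (and $\Cap_2=e^{-\I_2}$ when $N=2$), and since $c_0\le\Cap_2(\Omega)\le c_0^{-1}$ for a dimensional constant $c_0>0$ as soon as $\Omega$ is nearly spherical, the lemma reduces to
\[
\Cap_2(\Omega)-\Cap_2(B_1)\ \les\ \H^{N-1}(\partial\Omega)-\H^{N-1}(\partial B_1),
\]
both sides being nonnegative (the left because $\I_2$ is maximised by the ball at fixed volume, the right by the isoperimetric inequality). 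I will use the standard fact, which follows from Theorem \ref{th:equmeasure}, that up to a dimensional constant $\Cap_2(\Omega)$ equals the minimal Dirichlet energy $\int_{\R^N}|\nabla w|^2$ over $w\in\dot H^1(\R^N)$ with $w\ge\chi_\Omega$ and $w\to0$ at infinity, attained at the capacitary potential of $\Omega$; for the ball this minimiser is $v_{B_1}(x)=\min(1,|x|^{2-N})$ when $N\ge3$.

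For the capacity comparison I would proceed as follows. Write $\partial\Omega=\{(1+u(\theta))\theta:\theta\in\partial B_1\}$ with $\|u\|_{C^1}$ small, extend $u$ to be $0$-homogeneous on $\R^N\setminus\{0\}$, fix a cut-off $\eta\in C_c^\infty((1/2,2))$ with $\eta\equiv1$ near $1$, and set $\Psi(x):=(1+\eta(|x|)u(x))x$. For $\|u\|_{C^1}$ small this is a bi-Lipschitz homeomorphism of $\R^N$, equal to the identity outside the annulus $A=\{1/2<|x|<2\}$, with $\Psi(B_1)=\Omega$ and $\|D\Psi-\mathrm{Id}\|_\infty\les\|u\|_{C^1}$. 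Then $w:=v_{B_1}\circ\Psi^{-1}$ is admissible for $\Cap_2(\Omega)$, so after the change of variables $x=\Psi(y)$,
\[
\Cap_2(\Omega)-\Cap_2(B_1)\ \le\ \int_{\R^N}\Big(\big|\nabla v_{B_1}(y)\,(D\Psi(y))^{-1}\big|^2\det D\Psi(y)-|\nabla v_{B_1}(y)|^2\Big)\,dy .
\]
Writing $D\Psi=\mathrm{Id}+B$ (so $B$ is supported in $A$ with $|B|\les|u|+|\nabla_\tau u|$ pointwise, $\nabla_\tau$ the tangential gradient) and expanding the integrand to second order in $B$, the quadratic remainder is $\les\int_A|\nabla v_{B_1}|^2|B|^2\les\|u\|_{H^1(\partial B_1)}^2$, using that $|\nabla v_{B_1}|$ is bounded on $A$ and Fubini in polar coordinates. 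The term linear in $B$ is $\int_{\R^N}|\nabla v_{B_1}|^2(\mathrm{tr}\,B-2\,\hat y^{\top}B\hat y)\,dy$ — only the radial component of $B$ survives because $\nabla v_{B_1}$ is radial with $|\nabla v_{B_1}(y)|^2=(N-2)^2|y|^{-2(N-1)}$ on $\{|y|\ge1\}$ — and a short computation with the explicit $\Psi$ gives $\mathrm{tr}\,B-2\,\hat y^{\top}B\hat y=[(N-2)\eta(|y|)-|y|\eta'(|y|)]\,u(\hat y)$ there; integrating in $|y|$ (an integration by parts using $\eta\equiv1$ near $1$ and $\eta(2)=0$) the radial integral is independent of $\eta$ and the linear term reduces to $(N-2)^2\int_{\partial B_1}u\,d\H^{N-1}$.

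To close, I would use the volume constraint $|\Omega|=|B_1|$, which gives $\int_{\partial B_1}((1+u)^N-1)\,d\H^{N-1}=0$ and hence $\int_{\partial B_1}u\,d\H^{N-1}=O(\|u\|_{L^2(\partial B_1)}^2)$; thus $\Cap_2(\Omega)-\Cap_2(B_1)\les\|u\|_{H^1(\partial B_1)}^2$. Fuglede's expansion of the perimeter at the ball \cite{fuglede} (for nearly spherical sets of volume $|B_1|$ and, after a translation, barycenter at the origin) gives $\H^{N-1}(\partial\Omega)-\H^{N-1}(\partial B_1)\ges\|u\|_{H^1(\partial B_1)}^2$, and together with the first step this proves \eqref{eq:quantcurvature}. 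The case $N=2$ is identical in spirit, working with $\I_2=-\log\Cap_2$ and with the capacitary potential of $B_1$ relative to a large ball $B_R$ in place of $v_{B_1}$, all estimates being uniform in $R$ since $\Psi$ is the identity outside $A$.

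The one genuinely delicate point is that a crude estimate only yields $\Cap_2(\Omega)-\Cap_2(B_1)=O(\|u\|_{C^1})$, which is useless against the perimeter deficit $\sim\|u\|_{H^1}^2$; the argument hinges on showing that the first-order contribution collapses to the \emph{signed} integral $(N-2)^2\int_{\partial B_1}u$ (which is quadratic by the volume constraint) and that the remaining error is an $L^2$-type, not an $L^\infty$-type, quantity. I note that in this route the hypothesis $\mu\in L^\infty(\partial\Omega)$ does not enter; it is instead what makes the transport-based proof of \cite{GNRI} work, where one estimates $\I_2(B_1)-\I_2(\Omega)$ by $I_2(\tilde\mu)-I_2(\mu)$ with $\tilde\mu$ the radial push-forward onto $\partial B_1$ of the equilibrium measure $\mu$ of $\Omega$, the $L^\infty$ bound being used to control the resulting double integral.
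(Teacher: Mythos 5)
Your argument is correct, but it is genuinely different from the proof the paper relies on (that of \cite[Prop.~5.5, Th.~5.6]{GNRI}). There, one takes the equilibrium measure $\mu$ of $\Omega$, pushes it radially onto $\partial B_1$ to get a competitor for $\I_2(B_1)$, and estimates $I_2(\tilde\mu)-I_2(\mu)$; the hypothesis $\mu\in L^\infty(\partial\Omega)$ is exactly what controls the resulting double integral, and Fuglede's expansion then closes the argument, just as you indicate in your final remark. You instead transplant the capacitary \emph{potential}: testing the Dirichlet-energy characterization of $\Cap_2(\Omega)$ with $v_{B_1}\circ\Psi^{-1}$, expanding in $B=D\Psi-\mathrm{Id}$, and observing the key cancellation that the first-order term reduces (after the integration by parts in $r$, which indeed makes the radial factor equal to $1$ independently of $\eta$) to $(N-2)^2\int_{\partial B_1}u$, which is quadratic by the volume constraint; the quadratic remainder is an $L^2$-type quantity $\les \|u\|^2_{H^1(\partial B_1)}$, and Fuglede (after normalizing the barycenter, which is harmless since both capacity and perimeter are translation invariant) finishes the proof. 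What your route buys is that only one admissible test function is needed, so no regularity of $\Omega$ or of $\mu$ enters: the $L^\infty$ hypothesis becomes superfluous and the constant in \eqref{eq:quantcurvature} is purely dimensional, which is a genuinely stronger statement than the lemma as cited. What the measure-transport route buys is that it works directly at the level of $\I_\alpha$ for general $\alpha$, whereas your argument is tied to the local Dirichlet-energy formulation available at $\alpha=2$.

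Two points deserve more care than your sketch gives them. First, the admissible class: $v_{B_1}\circ\Psi^{-1}$ is Lipschitz but neither smooth nor compactly supported, so you should invoke the standard relaxation/density argument identifying the infimum over $C^1_c$ functions with that over the homogeneous Sobolev class, together with the identity $\int|\nabla v_\Omega|^2=c(N,2)\,\Cap_2(\Omega)$ linking the Dirichlet minimum to $1/\I_2(\Omega)$. Second, the case $N=2$ is not merely ``identical in spirit'': the condenser energy relative to $B_R$ scales like $1/\log R$, so your transplant estimate gives $\mathrm{cap}(\Omega,B_R)-\mathrm{cap}(B_1,B_R)\les \|u\|^2_{H^1}/(\log R)^2$ (the weight coming from $|\nabla v_{B_1,R}|^2$), and to convert this into $\I_2(B_1)-\I_2(\Omega)\les\|u\|^2_{H^1}$ you must also prove the expansion $\mathrm{cap}(\Omega,B_R)=2\pi/(\log R+\I_2(\Omega))+o\bigl((\log R)^{-2}\bigr)$ with an error uniform over nearly spherical sets, then multiply by $(\log R)^2$ and let $R\to\infty$. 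This is standard (a maximum-principle sandwich using the decay $v_\Omega(x)=-\log|x|+O(1/|x|)$), but it is an extra limiting argument, not just ``uniformity in $R$'', and it should be spelled out.
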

Thanks to the $\delta-$ball condition, it can be proven that for $Q$ small enough, minimizers of \eqref{prob:curvature} satisfy the hypothesis of Lemma \ref{lem:tech}. The proof of Theorem \ref{theorem1} is concluded by combining \eqref{eq:quantcurvature} together with \eqref{eq:basicquant}.

\begin{remark}
	One consequence of Theorem \ref{theorem1} is the stability of the ball under small $C^{1,1}$ perturbations. This extends a previous result of  \cite{FonFri} where stability with respect to $C^{2,\alpha}$ perturbations was proven. Let us however point out that in \cite{FonFri}, the asymptotic stability of the ball is also studied. 
\end{remark}

 An alternative way to restore well-posedness for \eqref{eq:mainprob} is to reduce the admissible class to convex sets. If this geometric restriction is not directly comparable with the $\delta-$ball condition, 
 it allows for less regular competitor (Lipschitz). 
As shown in \cite[Th. 2.3]{GNRII}, under the convexity constraint, there is always a minimizer for \eqref{eq:mainprob}.
\begin{theorem}
	For every $N\ge 2$, $\alpha\in(0,N]$ and $Q>0$, there exists a minimizer  of
	\begin{equation}\label{problemconvex}
	\min \left\{\cE_\alpha(\Omega)\,:\, \text{$|\Omega|=|B_1|$, $\Omega$ convex}  \right\}.
	\end{equation}
\end{theorem}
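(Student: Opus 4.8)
The plan is to run the Direct Method of the Calculus of Variations on the class of convex competitors. The only nontrivial point is compactness: one must prevent a minimizing sequence from spreading out (becoming long and thin, or escaping to infinity), and this is exactly where convexity is used, via the classical geometric inequality that, at fixed volume, the diameter of a convex body is controlled by a power of its perimeter.

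\textbf{Step 1: compactness of a minimizing sequence.} Set $m:=\inf\{\cE_\alpha(\Omega):|\Omega|=|B_1|,\ \Omega\text{ convex}\}$, which is finite since $\cE_\alpha(B_1)<+\infty$, and take a minimizing sequence $(\Omega_n)$ of convex sets with $|\Omega_n|=|B_1|$. First I would bound the perimeters. For $\alpha<N$ this is immediate: $\I_\alpha\ge 0$ by Proposition \ref{prop:positiv}, hence $\H^{N-1}(\partial\Omega_n)\le\cE_\alpha(\Omega_n)\le m+1$ for $n$ large. In the logarithmic case $\alpha=N$ one must be careful because $\I_N$ may be very negative; here I would use that, after translation, $\Omega_n\subset B_{\diam(\Omega_n)}$, so $\Cap_N(\Omega_n)\le\Cap_N(B_{\diam(\Omega_n)})=c(N)\diam(\Omega_n)$ by scaling, whence $\I_N(\Omega_n)\ge-\log(c(N)\diam(\Omega_n))$; combining this with the geometric inequality $\diam(\Omega_n)\le C(N)\,\H^{N-1}(\partial\Omega_n)^{N-1}/|\Omega_n|^{N-2}$ and the sublinearity of $\log$, the bound $\cE_N(\Omega_n)\le m+1$ again forces $\H^{N-1}(\partial\Omega_n)\le C$. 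In either case the same geometric inequality then bounds $\diam(\Omega_n)$, so that after translating all the $\Omega_n$ are contained in a fixed ball $B_R$. By the Blaschke selection theorem a subsequence (not relabelled) converges in Hausdorff distance to a convex set $\Omega\subset B_R$; since volume is continuous on convex bodies, $|\Omega|=\lim_n|\Omega_n|=|B_1|$, so in particular $\Omega$ has nonempty interior.

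\textbf{Step 2: lower semicontinuity of the energy.} It remains to prove $\cE_\alpha(\Omega)\le m$. The perimeter is lower semicontinuous along Hausdorff-converging sequences of convex bodies (indeed continuous, since the limit is nondegenerate), so $\H^{N-1}(\partial\Omega)\le\liminf_n\H^{N-1}(\partial\Omega_n)$. For the Riesz term, recall that $\I_\alpha$ is nonincreasing under inclusion (any admissible measure for the smaller set is admissible for the larger one). Fix a ball $B_\rho(x_0)\subset\Omega$ and put $\eps_n:=d_H(\Omega_n,\Omega)\to 0$; by convexity, $\Omega_n\subset\Omega+\overline{B_{\eps_n}}\subset x_0+(1+\eps_n/\rho)(\Omega-x_0)$, a translate of $(1+\eps_n/\rho)\Omega$. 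Hence, using translation invariance and the scaling of $\I_\alpha$, $\I_\alpha(\Omega_n)\ge(1+\eps_n/\rho)^{-(N-\alpha)}\I_\alpha(\Omega)$ for $\alpha<N$, respectively $\I_N(\Omega_n)\ge\I_N(\Omega)-\log(1+\eps_n/\rho)$ for $\alpha=N$; letting $n\to\infty$ gives $\liminf_n\I_\alpha(\Omega_n)\ge\I_\alpha(\Omega)$ in all cases. Adding the two estimates, $\cE_\alpha(\Omega)\le\liminf_n\H^{N-1}(\partial\Omega_n)+Q^2\liminf_n\I_\alpha(\Omega_n)\le\liminf_n\cE_\alpha(\Omega_n)=m$, so $\Omega$ is a minimizer.

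\textbf{Main obstacle.} The crux is the compactness in Step 1, namely excluding that a minimizing sequence degenerates. For general competitors this cannot be done — and indeed must fail, by Theorem \ref{theo:nonexist} — but for convex sets it follows from the geometric inequality bounding the diameter by the perimeter at fixed volume; making the logarithmic case work also requires controlling the sign of $\I_N$ via the a priori diameter bound. A secondary technical point is the behaviour of $\I_\alpha$ under Hausdorff convergence: the dilation argument above delivers lower semicontinuity precisely because the limiting body is nondegenerate, which is itself guaranteed by the persistence of the volume constraint in the limit.
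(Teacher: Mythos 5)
Your proof is correct: the diameter-by-perimeter bound for convex bodies at fixed volume (with the extra logarithmic-capacity estimate when $\alpha=N$), Blaschke selection, continuity of volume and perimeter at the nondegenerate limit, and the dilation trick giving lower semicontinuity of $\I_\alpha$ are all sound, and together they make the direct method work. The paper itself does not prove this theorem but cites \cite[Th. 2.3]{GNRII}, and your argument is essentially the same convexity-based compactness proof used there, so there is nothing to object to beyond the cosmetic point that for $\alpha=N$ one should note that your coercivity estimate also shows the infimum is not $-\infty$ before speaking of a minimizing sequence.
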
 
Having Theorem \ref{theorem1} in mind, it is natural to wonder if, at least in the Coulombic case, \eqref{problemconvex} is still minimized by the unit ball for small $Q$. In the bi-dimensional logarithmic case, it has been proven to hold in \cite[Th. 5.1]{GNRII}. 
\begin{theorem}\label{theo:ballconv}
 Let $N=\alpha=2$ then for $Q$ small enough, the only minimizer of \eqref{problemconvex} is the unit ball.
\end{theorem}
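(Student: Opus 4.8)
The plan is to argue exactly as in the proof of Theorem \ref{theorem1}: for $Q$ small I would show that any minimizer is a \emph{nearly spherical set} and then invoke Lemma \ref{lem:tech}, which --- being stated for every $N\ge 2$ --- covers the case $N=\alpha=2$. Fix $Q$ small and let $\Omega_Q$ be a minimizer of \eqref{problemconvex}, which exists by the previous theorem; after a translation we may assume its barycenter is at the origin. Since $B_1$ is admissible and $\I_2(B_1)=0$, minimality gives $\H^1(\partial\Omega_Q)+Q^2\I_2(\Omega_Q)\le 2\pi$. For a planar convex body one has $\H^1(\partial\Omega_Q)\ge 2\,\diam(\Omega_Q)$ and $\Cap_2(\Omega_Q)\le\diam(\Omega_Q)$, hence $\I_2(\Omega_Q)\ge-\log\diam(\Omega_Q)$; inserting this forces $\diam(\Omega_Q)\le C_0$ for a universal $C_0$ as soon as $Q\le1$. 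Since $|\Omega_Q|=\pi$, convexity then gives a uniform lower bound on the inradius as well, so $\Cap_2(\Omega_Q)$ is bounded above and below and, consequently, $\I_2(B_1)-\I_2(\Omega_Q)\le C$ uniformly in $Q$; in particular $\H^1(\partial\Omega_Q)-2\pi\le CQ^2$.

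Next I would show $\Omega_Q$ is nearly spherical. By the planar quantitative isoperimetric inequality $|\Omega_Q\Delta B_1|\lesssim Q$, and since a convex set that is $L^1$-close to $B_1$ is also Hausdorff-close to it, $\partial\Omega_Q$ is a radial graph $\{(1+u_Q(\theta))\theta:\theta\in\partial B_1\}$ with $\|u_Q\|_{L^\infty}\to0$ as $Q\to0$. The point where convexity is essential is that it forces this graph to have small slope: writing $\psi$ for the angle at a boundary point $x$ between the outer normal $\nu$ and the radial direction, $\cos\psi=\langle x,\nu\rangle/|x|\ge (1-\|u_Q\|_{L^\infty})/(1+\|u_Q\|_{L^\infty})$, because $\langle x,\nu\rangle$ is the distance from the origin to the support line at $x$ while $B_{1-\|u_Q\|_{L^\infty}}\subset\Omega_Q\subset B_{1+\|u_Q\|_{L^\infty}}$. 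Hence $\|u_Q'\|_{L^\infty}\lesssim\|u_Q\|_{L^\infty}^{1/2}\to0$, so $\Omega_Q$ is a nearly spherical set (with small $W^{1,\infty}$ norm) in the sense required by Lemma \ref{lem:tech}.

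The remaining --- and hardest --- point is to verify the other hypothesis of Lemma \ref{lem:tech}, that the equilibrium measure $\mu_Q$ of $\Omega_Q$ is bounded in $L^\infty(\partial\Omega_Q)$ uniformly in $Q$. A general convex (even nearly spherical) set need not have a bounded equilibrium measure, since the harmonic measure from infinity blows up mildly at corners; so I would first use minimality to exclude corners. If $\partial\Omega_Q$ had a corner of interior angle $\beta<\pi$, then cutting it by a chord at scale $\ell$ decreases the perimeter by $\gtrsim c(\beta)\,\ell$, changes the area by $O(\ell^2)$, and --- by monotonicity of the capacity and the $r^{-\gamma}$, $\gamma<1/2$, behaviour of $\partial_\nu v$ at a convex corner --- changes $\I_2$ by $O(\ell^{2-2\gamma})=o(\ell)$; after restoring the volume by a homothety, whose effect on $\cE_2$ is $O(\ell^2)$, this would contradict minimality. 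Thus $\partial\Omega_Q$ has a continuous tangent, and a bootstrap based on the Euler--Lagrange equation (schematically $\kappa=\lambda+c\,Q^2|\partial_\nu v_{\Omega_Q}|^2$ on the free boundary) upgrades this to a uniform $C^{1,\gamma}$ estimate, from which the uniform $L^\infty$ bound on $\mu_Q$ follows by standard boundary estimates.

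Once both hypotheses of Lemma \ref{lem:tech} hold, it provides a constant $C$, uniform for small $Q$, with $\I_2(B_1)-\I_2(\Omega_Q)\le C\,(\H^1(\partial\Omega_Q)-2\pi)$, and then
\[
\cE_2(\Omega_Q)-\cE_2(B_1)=\big(\H^1(\partial\Omega_Q)-2\pi\big)-Q^2\big(\I_2(B_1)-\I_2(\Omega_Q)\big)\ge(1-CQ^2)\big(\H^1(\partial\Omega_Q)-2\pi\big).
\]
For $Q^2<1/C$ the right-hand side is strictly positive unless $\H^1(\partial\Omega_Q)=2\pi$, that is, unless $\Omega_Q=B_1$ by the rigidity of the isoperimetric inequality; since $\Omega_Q$ is a minimizer and $B_1$ is admissible, $\Omega_Q=B_1$. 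This shows that, up to translations, the unit ball is the unique minimizer of \eqref{problemconvex} for $Q$ small. I expect the two ingredients feeding Lemma \ref{lem:tech} --- upgrading $L^1$-closeness to $W^{1,\infty}$-closeness, which crucially uses convexity, and the uniform $L^\infty$ bound on the equilibrium measures, which needs the corner exclusion and regularity analysis --- to be the core difficulties; the rest is routine.
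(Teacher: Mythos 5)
Your overall architecture is exactly the paper's: show the minimizer is a nearly spherical (small Lipschitz) graph, establish an $L^\infty$ bound on its equilibrium measure, feed both into Lemma \ref{lem:tech}, and conclude by the Fuglede-type comparison as in Theorem \ref{theorem1}. The soft parts of your argument are fine and essentially as in the paper: the uniform diameter/capacity bounds, the quantitative isoperimetric inequality giving $|\Omega_Q\Delta B_1|\les Q$, and the observation that convexity upgrades $L^1$-closeness to $W^{1,\infty}$-smallness of the radial graph (via the support-line estimate on $\langle x,\nu\rangle$) are all correct, as is the final computation $\cE_2(\Omega_Q)-\cE_2(B_1)\ge(1-CQ^2)\bigl(\H^1(\partial\Omega_Q)-2\pi\bigr)$ and the rigidity step.

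The genuine gap is the step you yourself flag as hardest: the uniform $L^\infty$ bound on $\mu_Q$. In the paper this is precisely the content of Theorem \ref{theo:reg} (uniform $C^{1,1}$ regularity of convex minimizers, \cite[Th. 4.4]{GNRII}), whose proof is long and rests on a careful cutting-by-lines estimate of the variation of the nonlocal term, crucially using that for convex sets the optimal measure lies in $L^p(\partial\Omega)$ for some $p>2$. Your substitute does not close this step. First, the corner-cutting argument (which is in the right spirit, and your order $\ell^{2-2\gamma}$, $\gamma<1/2$, is essentially correct up to a logarithm) only excludes genuine corner points; for a convex set this yields pointwise $C^1$ regularity with no modulus, and $C^1$ alone does not give a bounded harmonic-measure density, let alone one that is bounded uniformly in $Q$, which is what Lemma \ref{lem:tech} requires. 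Second, the ``bootstrap based on the Euler--Lagrange equation $\kappa=\lambda+cQ^2|\partial_\nu v|^2$'' is asserted, not proved: under the convexity constraint the stationarity condition is a priori only a variational inequality on the coincidence set, writing it at all requires regularity you do not yet have, and its right-hand side involves $|\partial_\nu v|^2$, i.e. the square of the very density you are trying to bound, so without an independent a priori integrability estimate (the $L^p$, $p>2$, bound of \cite[Th. 3.1]{GNRII}) the bootstrap is circular. So the proposal reproduces the paper's strategy but leaves its central technical ingredient --- the uniform regularity/$L^\infty$ estimate of Theorem \ref{theo:reg} --- unproved.
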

The idea is to show that for small charges, minimizers of \eqref{problemconvex} satisfy the hypothesis of Lemma \ref{lem:tech}. This is a consequence of the following regularity result \cite[Th. 4.4]{GNRII} .
\begin{theorem}\label{theo:reg}
 For $N=\alpha=2$ and every $Q>0$, every minimizer of \eqref{problemconvex} is $C^{1,1}$, with uniform $C^{1,1}$ bounds for small $Q$.
\end{theorem}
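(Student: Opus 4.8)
The plan is to fix an arbitrary minimizer $\Omega$ of \eqref{problemconvex} and to improve its regularity in two stages, at each stage producing a strictly more efficient convex competitor by slicing $\Omega$ with a chord: first we rule out corners, getting $\partial\Omega\in C^1$, and then we rule out arcs carrying too much turning, getting the quantitative $C^{1,1}$ bound. Recall that for a convex body, $\partial\Omega\in C^{1,1}$ with constant $\Lambda$ is equivalent to the uniform interior ball condition of radius $\Lambda^{-1}$, and also to $\kappa(A)\le\Lambda\,\Hu(A)$ for every boundary arc $A$, where $\kappa(A)$ is the total turning of $A$ (the length of its Gauss image); so the aim is exactly such an inequality, with $\Lambda$ uniform for $Q\le1$. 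We use the routine a priori bounds: from $\cE_2(\Omega)\le\cE_2(B_1)$, the diameter of $\Omega$ is bounded and, since $|\Omega|=|B_1|$ and $\Omega$ is convex, $\Omega$ contains a fixed ball up to translation; consequently $\Hu(\partial\Omega)$ and $\I_2(\Omega)$ are bounded above and away from $0$, uniformly for $Q\le1$. The volume constraint is harmless: whenever we pass from $\Omega$ to a convex $\Omega'\subset\Omega$ we compensate by a dilation $\Omega'\mapsto(1+\delta)\Omega'$, $\delta\simeq|\Omega|-|\Omega'|$, which perturbs $\Hu(\partial\,\cdot\,)$ and $\I_2(\,\cdot\,)$ only by $O(|\Omega|-|\Omega'|)$, of higher order than the perimeter gain in both stages. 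Finally, the capacitary variation will be read off from the Hadamard variation formula for the logarithmic capacity, which for $\Omega'\subset\Omega$ convex gives $0\le\I_2(\Omega')-\I_2(\Omega)\les\sup_{t}\int_{\Omega\setminus\Omega'}|\nabla v_{\Omega_t}|^2$, where $\{\Omega_t\}$ is a convex interpolation from $\Omega'$ to $\Omega$ and $v_{\Omega_t}$ are the corresponding equilibrium potentials (here we use that $v_{\Omega_t}=\I_2(\Omega_t)$ on $\Omega_t$, Theorem~\ref{th:equmeasure}(5)).

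\emph{Stage 1 (no corners, hence $\partial\Omega\in C^1$).} Suppose the tangent cone of $\Omega$ at some $p\in\partial\Omega$ is a proper wedge of interior opening $\theta<\pi$. For small $\eps$ let $E_\eps\subset\Omega$ be the region near $p$ cut off by a chord at scale $\eps$, and $\Omega_\eps:=\Omega\setminus E_\eps$, still convex. Replacing two nearly straight boundary pieces of total length $\simeq2\eps$ by a chord of length $\simeq2\eps\sin(\theta/2)$ produces a \emph{linear} perimeter gain, $\Hu(\partial\Omega_\eps)-\Hu(\partial\Omega)\le-c(\theta)\,\eps$. Near $p$ the exterior of $\Omega$ is a wedge of opening $2\pi-\theta>\pi$ on which $v_\Omega$ is harmonic and attains its boundary value on the two sides, so the standard corner asymptotics give $\int_{B_{C\eps}(p)}|\nabla v_\Omega|^2\simeq\eps^{2\pi/(2\pi-\theta)}=o(\eps)$, and the interpolating potentials satisfy the same bound; hence the capacitary loss is $o(\eps)$. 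Therefore $\cE_2(\Omega_\eps)<\cE_2(\Omega)$ for small $\eps$, a contradiction, so $\partial\Omega\in C^1$.

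\emph{Stage 2 (curvature bound, hence $\partial\Omega\in C^{1,1}$).} Assume, for contradiction, that $\kappa(A)>\Lambda\,\Hu(A)$ for some boundary arc $A$; write $\ell:=\Hu(A)$, $T:=\kappa(A)$, so the average curvature $q:=T/\ell>\Lambda$. Let $E\subset\Omega$ be the lens bounded by $A$ and the chord joining its endpoints, and $\Omega_A:=\Omega\setminus E$, still convex. A turning-angle computation now gives a \emph{quadratic} perimeter gain, $\Hu(\partial\Omega_A)-\Hu(\partial\Omega)\le-c_1\,\ell T^2$, while $|E|\simeq\ell^2 T\les\ell T^2/q$ (using $\ell\le T/q$). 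By Stage~1 there is no corner near $A$, so each convex domain interpolating between $\Omega_A$ and $\Omega$ is corner-free near $E$ and carries a \emph{flat} facet there (the sliding chord); therefore $|\nabla v_{\Omega_t}|$ is bounded on $E$ away from the facet's endpoints and only mildly ($L^2$) singular near them, and the Hadamard formula yields $\I_2(\Omega_A)-\I_2(\Omega)\les|E|\les\ell T^2/q$. The volume-compensation cost is likewise $\les|E|$, so
\[
\cE_2(\Omega_A)-\cE_2(\Omega)\ \le\ \big(-c_1+C(1+Q^2)/q\big)\,\ell T^2\ <\ 0
\]
as soon as $q>\Lambda>2C(1+Q^2)/c_1$. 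This contradiction shows $\kappa\le 2C(1+Q^2)c_1^{-1}\,\Hu$ on $\partial\Omega$, i.e.\ $\partial\Omega\in C^{1,1}$ with a constant uniform for $Q\le1$.

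\emph{The main obstacle.} The technical heart is the capacitary perturbation estimate: bounding $\I_2(\Omega\setminus E)-\I_2(\Omega)$ by the \emph{area} of the excised set in Stage~2 (and by $o(\eps)$ in Stage~1), rather than by the far cruder bound in terms of the Hausdorff distance, which is too lossy to beat the quadratic perimeter gain. The natural control passes through $|\nabla v|$ along $\partial\Omega$ --- the density of the equilibrium measure --- which can become large precisely where the curvature is large, i.e.\ at the very scales we are trying to control, so the estimate must be organized to avoid circularity; this is exactly where convexity, the flatness of the freshly created facet, the $C^1$-regularity gained in Stage~1, and sharp corner/harmonic-measure asymptotics for convex domains are all used, and where making the Hadamard variation formula (and the one-sided bounds across the mild singularities at the ends of the new chord) rigorous is the only genuinely delicate step. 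The perimeter computations and the uniformity in $Q$ are then routine.
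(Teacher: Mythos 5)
Your overall strategy---producing a strictly better convex competitor by slicing $\Omega$ along a chord and beating a perimeter gain against a capacitary loss---is exactly the strategy behind Theorem~\ref{theo:reg} as the paper describes it (``replace part of the boundary by a straight line,'' following \cite{GNRII}). But the step the paper explicitly identifies as the whole difficulty, namely ``to precisely estimate the variation of the non-local term,'' is precisely the step you do not prove, and your proposed treatment of it has a genuine gap. In Stage~2 you assert that, since $\partial\Omega$ has no corners by Stage~1, every convex interpolant between $\Omega_A$ and $\Omega$ is ``corner-free near $E$,'' and that $|\nabla v_{\Omega_t}|$ is bounded on $E$ away from the facet endpoints, so that a Hadamard-type formula yields $\I_2(\Omega_A)-\I_2(\Omega)\lesssim |E|$ with a constant depending only on the inradius/diameter bounds. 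Neither assertion is justified: $\Omega_A$ and all the sliding-chord interpolants do have corners where the chord meets $\partial\Omega$; and, more importantly, at the $\Omega$-end of the interpolation the gradient of the potential (equivalently the equilibrium density) near the arc $A$ is exactly the quantity that blows up when the curvature of $A$ is large---which is the hypothesis you are trying to contradict. Asserting boundedness of $|\nabla v_{\Omega_t}|$ on $E$ with a constant uniform in that curvature is the circularity you yourself flag, and it is not resolved. Since the competitor only gains the quadratic quantity $\ell T^2$, any degeneration of this constant in the regime of large $T/\ell$ destroys the comparison, so as written the argument does not close.

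The route the paper indicates through this difficulty is the ingredient your proposal never uses or establishes: for convex sets the optimal charge distribution lies in $L^p(\partial\Omega)$ for some $p>2$, with bounds uniform in the relevant class (\cite[Th. 3.1]{GNRII}). It is this integrability---rather than a pointwise bound on $|\nabla v|$ near the excised region---that allows one to estimate the change of $\I_2$ under flattening a piece of the boundary in a non-circular, quantitative way that beats the perimeter gain. Without proving either this $L^p$ estimate or a rigorous, uniformly quantified version of your Hadamard variation inequality (including the one-sided singularities at the corners your cut creates), the proposal is a correct outline of the known strategy but not a proof. A smaller, fixable point: the claimed gain $\Hu(\partial\Omega_A)-\Hu(\partial\Omega)\le -c_1\,\ell T^2$ can fail if the turning of $A$ concentrates at the endpoints of the arc (the chord may coincide with the arc), so the arcs realizing $\kappa(A)>\Lambda\,\Hu(A)$ must be selected more carefully; unlike the capacitary estimate, this is routine to repair.
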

This result proves that in dimension two and when restricted to the class of convex sets, conical singularities never appear. The proof of Theorem \ref{theo:reg} is quite long and technical 
but the main idea is to show that if $\Omega$ is not regular enough, then we can lower the energy by replacing part of the boundary by a straight line. The major difficulty is to precisely estimate the variation of the non-local term. 
A crucial technical point is that for convex sets, the optimal charge distribution is in $L^p(\partial \Omega)$ for some $p>2$ (see \cite[Th. 3.1]{GNRII}). In higher dimensions, it seems difficult to obtain regularity by such simple cutting-by-planes argument but it would be interesting to investigate further this question.   \\

Instead of imposing constraints on the admissible sets, another way of restoring well-posedness is to take into account regularizing mechanisms in the functional. 
One possibility, proposed in \cite{MurNov} is to take into consideration entropic effects and impose that the charge  is distributed in $\Omega$. The functional then becomes (in the physical case $N=3$, $\alpha=2$)
\[
 \mathcal{G}(\Omega):=\H^{N-1}(\partial \Omega)+Q^2 \mathcal{J}(\Omega),
\]
where 
\[
 \mathcal{J}(\Omega):=\min_{(v,\rho)} \lt\{ \int_{\R^3} |\nabla v|^2 +\int_{\Omega} \rho^2 \ : \ -\Delta v= \rho \textrm{  in } \R^3 \textrm{ and } \int_{\Omega} \rho=1 \rt\}.
\]
We refer to \cite{MurNov} for a physical motivation of this model. Their main result is existence of minimizers for this functional (see \cite[Th. 3]{MurNov}).
\begin{theorem}
 For every $Q>0$ and every $R>1$, there exists a minimizer of 
 \[
  \min \lt\{ \mathcal{G}(\Omega) \ : \ |\Omega|=|B_1| \textrm{ and } \Omega\subset B_R\rt\}.
 \]

\end{theorem}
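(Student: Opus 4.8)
The plan is to run the direct method of the calculus of variations. The admissible class is non-empty because $R>1$ forces $|B_1|<|B_R|$, and for every admissible $\Omega$ the energy $\mathcal{J}(\Omega)$ is finite: choosing $\rho=|\Omega|^{-1}\chi_\Omega$, which lies in $L^2(\R^3)$ with bounded support, and letting $v\in\dot H^1(\R^3)$ solve $-\Delta v=\rho$, the competitor $(v,\rho)$ has finite energy. Hence $m:=\inf\{\mathcal{G}(\Omega):|\Omega|=|B_1|,\ \Omega\subset B_R\}$ is finite; I would fix a minimizing sequence $(\Omega_n)$, working in the class of sets of finite perimeter and reading $\H^{N-1}(\partial\Omega)$ as the De Giorgi perimeter, as is customary for such problems. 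Since $\mathcal{J}\ge0$, the perimeters of the $\Omega_n$ are bounded by $m+1$, so $(\chi_{\Omega_n})$ is bounded in $BV(B_R)$; by compactness I can extract a subsequence with $\chi_{\Omega_n}\to\chi_\Omega$ in $L^1(B_R)$ and a.e., where $\Omega$ is a set of finite perimeter with $|\Omega|=|B_1|$, $\Omega\subset B_R$ up to a null set, and $\H^{N-1}(\partial\Omega)\le\liminf_n\H^{N-1}(\partial\Omega_n)$ by lower semicontinuity of the perimeter.

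The core of the argument is the lower semicontinuity of $\mathcal{J}$ along $\Omega_n\to\Omega$. I would pick near-optimal pairs $(v_n,\rho_n)$ for $\mathcal{J}(\Omega_n)$, so that $-\Delta v_n=\rho_n$ in $\R^3$, $\operatorname{supp}\rho_n\subset\Omega_n\subset B_R$, $\int_{\Omega_n}\rho_n=1$, and their energies are bounded. This bounds $\|\rho_n\|_{L^2}$ and $\|\nabla v_n\|_{L^2(\R^3)}$, so after a further extraction $\rho_n\rightharpoonup\rho$ in $L^2(B_R)$ and $v_n\rightharpoonup v$ in $\dot H^1(\R^3)$ (using $\dot H^1(\R^3)\hookrightarrow L^6$). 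Passing to the limit in the weak formulation of $-\Delta v_n=\rho_n$ gives $-\Delta v=\rho$ in $\R^3$. Since $\|\chi_{\Omega_n}-\chi_\Omega\|_{L^2}^2=|\Omega_n\Delta\Omega|\to0$, multiplying the weakly convergent $\rho_n$ by the strongly convergent, uniformly bounded $\chi_{\Omega_n}$ shows $\rho_n=\rho_n\chi_{\Omega_n}\rightharpoonup\rho\chi_\Omega$ in $L^2$; comparing with $\rho_n\rightharpoonup\rho$ forces $\rho=\rho\chi_\Omega$, i.e. $\operatorname{supp}\rho\subset\Omega$. Testing $\rho_n\rightharpoonup\rho$ against $\chi_{B_R}$ gives $\int_\Omega\rho=\lim_n\int_{\Omega_n}\rho_n=1$, so $(v,\rho)$ is admissible for $\Omega$, and weak lower semicontinuity of the two $L^2$-norms yields
\[
\mathcal{J}(\Omega)\le\int_{\R^3}|\nabla v|^2+\int_\Omega\rho^2\le\liminf_n\Big(\int_{\R^3}|\nabla v_n|^2+\int_{\Omega_n}\rho_n^2\Big)=\liminf_n\mathcal{J}(\Omega_n).
\]

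Putting the two pieces together, $\mathcal{G}(\Omega)=\H^{N-1}(\partial\Omega)+Q^2\mathcal{J}(\Omega)\le\liminf_n\mathcal{G}(\Omega_n)=m$, and since $\Omega$ is admissible it is the desired minimizer. I expect the main obstacle to be precisely the lower semicontinuity of $\mathcal{J}$ — more exactly, the claim that no charge is lost in the limit, i.e. that the weak $L^2$-limit $\rho$ of the near-optimal densities is still supported in $\Omega$ and still carries mass $1$. This is where the confinement $\Omega_n\subset B_R$ is essential: it makes $(\rho_n)$ equi-integrable on a fixed bounded set and rules out mass escaping to infinity, so that no concentration-compactness is needed. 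The remaining steps — $BV$-compactness of the $\Omega_n$, lower semicontinuity of the perimeter, stability of $-\Delta v_n=\rho_n$ under weak limits, and weak lower semicontinuity of the quadratic energy — are all routine.
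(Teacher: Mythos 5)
Your argument is correct, but note that the paper itself gives no proof of this statement: it quotes the result from Muratov--Novaga \cite{MurNov}, where the existence proof is exactly the direct-method argument you outline (finite-perimeter formulation, $BV$ compactness and lower semicontinuity of the perimeter under the confinement $\Omega\subset B_R$, weak $L^2$ compactness of near-optimal densities $\rho_n$ and weak $\dot H^1$ compactness of the potentials, stability of $-\Delta v_n=\rho_n$, and the observation that $\chi_{\Omega_n}\to\chi_\Omega$ strongly forces the weak limit $\rho$ to be supported in $\Omega$ with $\int_\Omega\rho=1$, so no charge is lost). Your two interpretive choices --- reading $\H^{N-1}(\partial\Omega)$ as the De Giorgi perimeter and requiring $\rho$ to be supported in $\Omega$ --- are precisely the setting of \cite{MurNov}, and they are what makes the confinement suffice in place of any concentration-compactness argument; with them, your proof is complete.
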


Not much is known at the moment about the  regularity of the minimizers of this problem. 
The stability of the ball is also an open question.

\section{A perfectly conducting drop in a uniform external field}\label{section3}
The problem of finding the equilibrium shape of charged droplets is closely related to the problem of finding the equilibrium shape of a conducting drop submitted to an external electric field. 
If the understanding of meteorological phenomena  has first motivated the study of this question \cite{TayWil}, 
the wide spectrum of modern applications ranging from the breakdown of dielectrics due to the presence of water droplets to ink-jet printers might explain the large amount of literature on the subject (see for instance \cite{miksis,DubashMestel,karya}).
In the simplest and most considered case of a constant external field $\mathbb{E}$, we have (recall \eqref{probF}) for $\Omega\subset \R^N$ with $N\ge 3$,
\[
 \mathcal{F}(\Omega)=\min_{\mu(\Omega)=0} I_2(\mu) -\int_\Omega \mathbb{E}\cdot x d\mu
\]
and we look for a minimizer of 
\begin{equation}\label{prob:extfield}
 \min_{|\Omega|= |B_1| }\H^{N-1}(\partial \Omega)+\F(\Omega).
\end{equation}

It is quite easy to see that this problem is ill-posed.
\begin{theorem}
For every $N\ge 3$, and every $\mathbb{E}\in \R^N$,
\[
\inf_{|\Omega|=|B_1|} \H^{N-1}(\partial \Omega)+ \F(\Omega)=-\infty.
\]
\end{theorem}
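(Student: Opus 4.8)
The plan is to exhibit an explicit family of competitors along which the energy diverges to $-\infty$, using that $\F$ is defined as a \emph{minimum} (so any admissible charge distribution gives an upper bound for $\F(\Omega)$) and that a nonzero external field rewards charge separation. I assume $\mathbb{E}\neq 0$, which is the relevant case: for $\mathbb{E}=0$ one has $\F\equiv 0$ and the infimum is just $\H^{N-1}(\partial B_1)$. Write $e:=\mathbb{E}/|\mathbb{E}|$ and $\rho:=2^{-1/N}$, so that two disjoint balls of radius $\rho$ have total volume $|B_1|$.

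First I would fix the geometry: for $R>2\rho$ set $\Omega_R:=B_\rho(Re)\cup B_\rho(-Re)$, a disjoint union of two balls with $|\Omega_R|=|B_1|$ and perimeter $\H^{N-1}(\partial\Omega_R)=2\,\H^{N-1}(\partial B_\rho)$, a constant independent of $R$. Then I would bound $\F(\Omega_R)$ from above by plugging in $\mu_R:=\mu_1-\mu_2$, where $\mu_i$ is the equilibrium measure of the $i$-th ball; this is admissible since $\mu_R(\Omega_R)=1-1=0$. For the self-energy, expanding $I_2(\mu_R)=I_2(\mu_1)-2I_2(\mu_1,\mu_2)+I_2(\mu_2)$ and discarding the nonnegative cross term (nonnegative because $\mu_1,\mu_2\ge 0$) gives $I_2(\mu_R)\le 2\,\I_2(B_\rho)$, again independent of $R$. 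For the field term, the equilibrium measure of a ball is symmetric about its centre (by uniqueness in Theorem \ref{th:equmeasure}, indeed it is the uniform measure on the boundary sphere since $\alpha=2$), so $\int x\,d\mu_i=\pm Re$ and hence $-\int_{\Omega_R}\mathbb{E}\cdot x\,d\mu_R=-2R|\mathbb{E}|$. Combining the three contributions, the energy of $\Omega_R$ is at most $C-2R|\mathbb{E}|$ for a constant $C$ depending only on $N$, which tends to $-\infty$ as $R\to\infty$.

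There is essentially no serious obstacle here; the content is in identifying the right mechanism. Unlike for the charged-drop functional $\cE_\alpha$, where ill-posedness comes from the dimensional mismatch between perimeter and $\I_\alpha$ (Proposition \ref{capdim}), here it is the \emph{linear} growth in $R$ of the dipole term $-\int\mathbb{E}\cdot x\,d\mu$ that drives the energy down: one simply pulls a positive and a negative electrode apart along the field. The only points needing (minor) care are checking that the two balls remain disjoint (true once $R>2\rho$) and that the cross term has the favourable sign; note also that inflating the charge magnitude instead of the separation would fail, since the self-energy $I_2$ scales quadratically in the charge and would overwhelm the linear field gain.
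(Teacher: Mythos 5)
Your proof is correct and takes essentially the same route as the paper's: two balls of half the volume pulled apart along the field direction, with an explicit opposite-charge test measure (you use the equilibrium measures of the balls where the paper uses $\pm$ the characteristic functions), so that the field term decreases linearly in the separation while the perimeter and self-energy stay bounded. Your side remark about $\mathbb{E}=0$ is accurate, and the paper's proof implicitly makes the same restriction by normalizing $\mathbb{E}=e_1$.
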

\begin{proof}
Without loss of generality, we may assume that $\mathbb{E}=e_1:=(1,0,\dots,0)$.
Consider for $n\in\mathbb N$ the admissible set $\Omega_n$ made of two disjoint balls $B_n^{\pm}$ of measure $\frac{|B_1|}{2}$ and centers $ \pm ne_1$. 
We then have 
\[
\H^{N-1}(\partial \Omega_n)+ \F(\Omega_n)=2 \H^{N-1}(\partial B_n^{\pm})+\F(B_n^-\cup B_n^+).
\]
Using  $\mu_n:=\chi_{B_n^+}-\chi_{B_n^-}$ as test measure for $\F(B_n^-\cup B_n^+)$, we find
\begin{align*}
\F(\Omega_n)&\le I_2(\mu_n)-\int_{\Omega_n} x_1 d\mu_n\\
&= 2 I_2(\chi_{B_n^{\pm}})-2\int_{B_n^+\times B_n^-} \frac{dx dy}{|x-y|^{N-2}} -\int_{\Omega_n} (\chi_{B_n^+}-\chi_{B_n^-})x_1 dx\\
&\le 2 I_2(\chi_{B_n^{\pm}})- 2n|B_n^{\pm}|,
\end{align*} 
 which goes to $-\infty$ as $n\to+\infty$.
\end{proof}
Contrarily to the case of charged liquid drops, ill-posedness still holds in the class of convex sets.
\begin{theorem}
For every $N\ge 3$ and every $\mathbb{E}\in \R^N$,
\[
\inf_{\stackrel{|\Omega|=|B_1|}{\Omega \textrm{ convex}}} \H^{N-1}(\partial \Omega)+ \F(\Omega)=-\infty.
\]
\end{theorem}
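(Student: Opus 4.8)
The plan is to reproduce the mechanism of the previous non-existence proof while respecting convexity: the disjoint union of two far-apart balls is of course not convex, but a single very elongated convex \emph{needle} still lets one place a positive and a negative charge at mutual distance of the order of its length, at a negligible capacitary cost. After a rotation we may assume $\mathbb{E}=e_1$, so that $\F(\Omega)=\min_{\mu(\Omega)=0}\bigl(I_2(\mu)-\int_\Omega x_1\,d\mu\bigr)$.

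Concretely, for $L\gg1$ I would fix $\eps=\eps_L>0$ by the volume normalization $2L\,|B^{N-1}_{\eps_L}|=|B_1|$ (here $B^{N-1}_{r}$ denotes the $r$-ball in $\R^{N-1}$), so that $\eps_L\sim L^{-1/(N-1)}$, and take the solid cylinder $\Omega_L:=[-L,L]\times B^{N-1}_{\eps_L}\subset\R^N$, which is convex, compact and satisfies $|\Omega_L|=|B_1|$. Splitting $\partial\Omega_L$ into its lateral part and its two caps gives $\H^{N-1}(\partial\Omega_L)=2L\,\H^{N-2}(\partial B^{N-1}_{\eps_L})+2|B^{N-1}_{\eps_L}|\les L\,\eps_L^{N-2}\sim L^{1/(N-1)}$. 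To estimate $\F(\Omega_L)$ from above I would use the admissible test measure $\mu_L:=\rho^+_L-\rho^-_L$, where $\rho^\pm_L$ is the uniform probability measure on the ball of radius $\eps_L/2$ centered at $\pm(L-\eps_L)e_1$; for $L$ large these balls are contained in $\Omega_L$, so $\mu_L(\Omega_L)=0$. By the scaling identity $I_2\bigl(\tfrac{1}{|B_r|}\chi_{B_r}\bigr)=c_N r^{-(N-2)}$ and the positivity $I_2(\rho^+_L,\rho^-_L)\ge0$ from Proposition \ref{prop:positiv}, one gets $I_2(\mu_L)\le 2c_N(\eps_L/2)^{-(N-2)}\sim L^{(N-2)/(N-1)}$, whereas $\int_{\Omega_L}x_1\,d\mu_L=2(L-\eps_L)$; hence $\F(\Omega_L)\le I_2(\mu_L)-\int_{\Omega_L}x_1\,d\mu_L\le C\,L^{(N-2)/(N-1)}-2L+2\eps_L$.

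Adding the two estimates, $\H^{N-1}(\partial\Omega_L)+\F(\Omega_L)\le C\,L^{1/(N-1)}+C\,L^{(N-2)/(N-1)}-2L+2\eps_L$, and since for every $N\ge3$ both exponents $\tfrac1{N-1}$ and $\tfrac{N-2}{N-1}$ are strictly smaller than $1$, the right-hand side tends to $-\infty$ as $L\to+\infty$, which proves the claim. I do not expect a genuine obstacle here: the only points requiring (routine) care are the scaling of the Coulombic self-energy of a small ball and checking that the two charged balls lie inside the needle. The conceptual point — and the reason ill-posedness survives the convexity constraint, contrary to the charged drop model of Section \ref{section2}, where convexity restores well-posedness through sub-additivity of the Riesz energy under Minkowski sums — is that the external-field term $-\int_\Omega\mathbb{E}\cdot x\,d\mu$ is driven down linearly by charge separation, which a thin convex set permits at a perimeter cost that is sub-linear in the separation length.
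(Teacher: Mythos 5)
Your proof is correct and follows essentially the same route as the paper's: a long thin convex ``needle'' with volume-fixing cross-section $\sim L^{-1/(N-1)}$, a test measure concentrating opposite charges near the two ends, and the same scalings (perimeter $\lesssim L^{1/(N-1)}$, self-energy $\lesssim L^{(N-2)/(N-1)}$, field term $\sim -2L$). The only cosmetic differences are that the paper uses a rectangular box with uniform charges on thin end slabs, whereas you use a cylinder with uniform charges on small balls near the ends; the estimates are identical in order.
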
 
\begin{proof}
	As before, we may assume that $\mathbb{E}=e_1$. For  $n\in \N$, consider the set 
	\[
	\Omega_n:=\left\{x=(x_1,\dots,x_N)\in\R^N\,:\, |x_1|\le \frac{n}{2}\,\,\, |x_i|\le \frac{\varepsilon_n}{2},\,\, i=2,\dots,N \,\right\},
	\]
	where 
	\[	
	\varepsilon_n:=\left(\frac{|B_1|}{n}\right)^\frac{1}{N-1}.
	\]
	Notice that $\varepsilon_n$ is chosen so that $|\Omega_n|=|B_1|$. By definition of $\Omega_n$ we have
	\[
	\H^{N-1}(\partial \Omega_n)\les n\varepsilon^{N-2} \les n^{\frac{1}{N-1}}.
	\]
	Moreover, by letting 
	\[
	\Omega^-_n:=\Omega_n\cap \lt\{ x \ : \  x_1\in\lt[-\frac{n}{2},-\frac{n}{2}+\eps_n\rt]\rt\}, \qquad  \Omega^+_n:=\Omega_n\cap \lt\{ x \ : \  x_1\in\lt[\frac{n}{2}-\eps_n,\frac{n}{2}\rt]\rt\} 
	\]
	and then
	\[
	\mu_n:=\frac{\chi_{\Omega^+_n}}{|\Omega^+_n|}-\frac{\chi_{\Omega^-_n}}{|\Omega^-_n|},
	\] 
	we have that $\mu_n$ is admissible for $\F(\Omega_n)$ and thus $\F(\Omega_n)\le F(\mu_n)$.  Since on the one hand,
	\[
	\int_{\Omega_n} x_1\,d\mu_n\ges n 
	\]
	and on the other hand 
	\[
	I(\mu_n)\les \frac{1}{|\Omega^{+}_n|^2}\int_{\Omega_n^+\times \Omega_n^+}\frac{dx dy}{|x-y|^{N-2}}\les \eps_n^{-(N-2)}\les n^\frac{N-2}{N-1},
	\]
	we find that 
	\[
	\H^{N-1}(\partial \Omega_n)+ \F(\Omega_n)\les n^{\frac{1}{N-1}}+n^\frac{N-2}{N-1}-n
	\]
	which diverges to $-\infty$ as $n\to+\infty$.
\end{proof}

\section{Open problems}\label{section4}
In this last section we state few open problems. 
\begin{itemize}
	\item[i)] As already pointed out, in light of the large physical literature about the Taylor cones, it would be interesting to find a reasonable setting where these can be rigorously studied.
	\item [ii)] A first step would be for instance to understand if Theorem \ref{theo:ballconv} still holds in dimension $N\ge 3$ or if conical singularities can appear in the class of convex sets.
	\item[iii)] It is still an open question to know if the ball is stable under small  Lipschitz deformations for small charges. 
	\item[iv)] It is natural to try to extend the stability analysis for the ball to $\alpha\neq 2$.
	\item[v)] In light of Theorem \ref{theorem1}, it would be interesting to see if well-posedness holds (for small charge) when $\alpha\in[N-1,N)$.
	\item[vi)] Not much is known about the minimizers of the functional $\mathcal{G}$ introduced in \cite{MurNov}. It would be interesting to study existence/non-existence of minimizers without confinement, their regularity and the stability of the ball. 
	\item[vii)] Another  natural way to regularize \eqref{eq:mainprob} would be to add a curvature term of Willmore type in the energy. One would then study the functional
	\[
	 \H^{N-1}(\partial \Omega)+ \I_\alpha(\Omega)+\int_{\partial \Omega} \kappa^2,
	\]
where $\kappa$ is the mean curvature of $\partial \Omega$.
	\item[viii)] The physical model behind the electrowetting technique is very similar to the ones studied here. This technique, which is used for optical devices
	and electronic displays consists in applying a voltage on a sessile conducting drop. As already observed by Lippmann \cite{Lip} in 1875, this leads to a modification of the (apparent) contact angle, while the microscopic contact angle is still the one given by the classical Young law \cite{Scheid}. 
	It was discovered later on that the macroscopic angle decreases until reaching a saturation angle (see the review paper \cite{mugele}). 
	Despite its importance for application, there has been only few rigorous results about electrowetting (see \cite{fontkindI,fontkindII}) and it would be interesting to study both the formation of the macroscopic angle and the saturation phenomenon.  
\end{itemize}
\frenchspacing
\bibliographystyle{alpha}
\bibliography{dropinfield}

\end{document}